

\documentclass[12pt]{amsart}
\usepackage{geometry}                
\geometry{letterpaper}                   
\usepackage{graphicx}
\usepackage{amssymb}
\usepackage{epstopdf}
\usepackage{amsmath}
\usepackage{color}
\usepackage{hyperref}
\usepackage{pdfsync}
 
\usepackage[all]{xy}
\xyoption{matrix}
\xyoption{arrow}
\usepackage{tikz}

\usetikzlibrary{arrows,decorations.pathmorphing,backgrounds,positioning,fit,petri}

\tikzset{help lines/.style={step=#1cm,very thin, color=gray},
help lines/.default=.5} 
\tikzset{thick grid/.style={step=#1cm,thick, color=gray},
thick grid/.default=1} 

\pagestyle{plain}

\textwidth = 6.2 in 
\textheight = 8.6 in 
\oddsidemargin = .1 in 
\evensidemargin = .1 in 
\topmargin = 0.1 in
\headheight = 0.0 in
\headsep = 0.2 in
\parskip = 0.0in
\parindent = 0.2in

\newtheorem{theorem}{Theorem}[section]
\newtheorem{lemma}[theorem]{Lemma}
\newtheorem{corollary}[theorem]{Corollary}

\newenvironment{customthm}[1]
  {\innercustomthm}{\endinnercustomthm}
  	

\theoremstyle{definition}

\theoremstyle{remark}
\newtheorem{remark}[theorem]{Remark}
 
\numberwithin{equation}{section}
 

\DeclareGraphicsRule{.tif}{png}{.png}{`convert #1 `dirname #1`/`basename #1 .tif`.png}





\DeclareMathOperator{\Hom}{Hom}%
\DeclareMathOperator{\Ext}{Ext}%
%
%
%
%
%
%
%


\newcommand{\commentout}[1]{}

\newcommand{\cE}{\ensuremath{{\mathcal{E}}}}

\newcommand{\cN}{\ensuremath{{\mathcal{N}}}}

\newcommand{\cS}{\ensuremath{{\mathcal{S}}}}

\title{Probability distribution for exceptional sequences of type $A_n$}

\author{Kiyoshi Igusa}
\address{Department of Mathematics, Brandeis University, Waltham, MA 02454}
\email{igusa@brandeis.edu}
\thanks{Supported by the Simons Foundation}

\keywords{quivers, representations, signed exceptional sequences, rooted labeled forests}

\subjclass[2020]{
16G20: 05C05}  	


\begin{document}

\begin{abstract}We determine the probability distribution for relative projective objects in an exceptional sequence of type $A_n$ of any length. We show that these events (the $j$-th object in an exceptional sequence of length $k\le n$ being relatively projective) are independent of each other and from the length of the sequence. This gives a probabilistic interpretation of the product formula for the number of exceptional sequences of length $k$ and clusters or partial clusters of size $k$ since the latter numbers are proportional to the number of signed exceptional sequences of length $k$. 
\end{abstract}

\maketitle



\section*{Introduction}\label{sec2}

Let $Q$ be a quiver of type $A_n$ with any orientation. For example, it could be
\[
	1\to 2 \leftarrow 3\leftarrow 4\to 5\to 6.
\]
A representation of a quiver is called \emph{exceptional} if it is indecomposable and rigid, i.e., it has no self-extensions. In this case, since $Q$ is a Dynkin quiver, all indecomposable representations are rigid. Recall that an \emph{exceptional sequence} is a sequence of exceptional modules $(E_k,\cdots,E_1)$, which we find convenient to number from right to left, so that
\[
	\Hom(E_i,E_j)=0=\Ext(E_i,E_j)
\]
for all $i<j$. Thus, there are no homomorphisms or extensions going from right to left. The dimension vectors of the terms $E_i$ are linearly independent. So, $k\le n$. When $k=n$, the exceptional sequence is called \emph{complete}. It is well-known that the number of complete exceptional sequences of type $A_n$ with any orientation of this quiver is $(n+1)^{n-1}$ and there is a bijection between complete exceptional sequences of type $A_n$ and rooted labeled forests with $n$ vertices \cite{GY}, \cite{IS}.

For the quivers of type $A_n$ with orientation 
\[
	1\to 2\to 3\to \cdots\to n
\]
it was shown in \cite{IS} that the vertex $v_i$ in the corresponding rooted labeled forest is ascending/descending (i.e., $v_i$ is not a root, and its label $i$ is smaller/larger than the label $j$ of its parent $v_j$) if and only if $E_i$ is relatively injective/relatively projective and not both in the complete exceptional sequence $E_\ast$. The order is reversed from that of \cite{IS} since we are numbering the terms $E_k$ backwards (from right-to-left instead of the usual left-to-right as in Theorem \ref{IS theorem}.)

For example, the exceptional sequence $(S_3,P_1,S_1)$ for $A_3$ has forest
\[
\xymatrixrowsep{10pt}\xymatrixcolsep{10pt}
\xymatrix{
 & 2\ar@{-}[dl]\ar@{-}[dr]\\
1 & & 3
	}
\]

To analyze the probability distribution of relatively projective objects in an exception sequence we will go over the proof that the number of exceptional sequences of type $A_n$ and length $\ell$ is $\binom{n+1}{\ell+1}(n+1)^{\ell-1}$ while keeping track of which terms are relatively projective. We denote by $B_{\ell,k}$ the event that the term $E_k$ in an exceptional sequence $(E_\ell,\cdots,E_1)$ is relatively projective in the right perpendicular category $(E_{k-1}\oplus\cdots\oplus E_1)^\perp$.

\begin{customthm}{A}[Theorem \ref{thm: thm A}]
\label{thm A}In a random exceptional sequence $(E_\ell,\cdots,E_1)$ of length $\ell$ for a quiver of type $A_n$, 
\[
	\mathbb P(B_{\ell,k})=\frac{k+1}{n+1}.
\]
Furthermore, the events $B_{\ell,k}$, for different values of $k\le \ell$ are independent. 
\end{customthm}

For $\ell=n$, this recovers the formula (See Corollary \ref{cor: generating function for exc seq} for $\ell<n$ case.)
\[
	f_{A_n}(z)=(n+1)^{n-1}\prod_{k=1}^n \frac{(k+1)z+n-k}{n+1}=z\prod_{k=1}^{n-1} ((k+1)z+n-k)
\]
for the generating function 
\[
	f_{A_n}(z)=\sum a_pz^p
\]
where $a_p$ is the number of complete exceptional sequences with $p$ relatively projective terms. The factor 
\[
\frac{(k+1)z+n-k}{n+1}=1-(1-z)\mathbb P(B_k)
\]
is the contribution of the event $B_k$. When $k=n$ this contribution is $z$ since $\mathbb P(B_n)=1$.

By duality a similar statement (Corollary \ref{cor: injective probability}) holds for the probability distribution of the relatively injective terms where $E_i$ is relatively injective if it is an injective object in the left perpendicular category $^\perp(E_\ell\oplus\cdots\oplus E_{i+1})$ of the objects in the exceptional sequence to the left of $E_i$. This also holds for $A_n$ with any orientation. But relative projectivity and relative injectivity are strongly correlated. For
example, in the linearly oriented case, every object in a complete exceptional sequence is either relatively projective or relatively injective or both \cite{IS}.

At the end of the paper we use the results of \cite{IS} to prove the analogous statement about the probability distribution of ascending vertices in a rooted labeled forest (Corollary \ref{cor: probabilities for forests}). We also give an easy method to compute probability distributions (Theorem \ref{thm: recursive formula for fAn}) and use it to show that our main result does not hold for $D_4$.
 
The proof of Theorem \ref{thm A} comes from comparing exceptional sequence of length $k$ with subgraphs of a cyclic graph with $k+1$ out of $n+1$ edges deleted. In \cite{IS2}, a simplified version of this counting argument is obtained and used to extend Theorem \ref{thm A} to the $B_n/C_n$ case.


\section{Counting subgraphs of a cycle}\label{ss21}

We begin by counting the number of subgraphs of the $h$-cycle graph $C_h$ consisting of $h=n+1$ vertices and $h$ edges arranged in one cycle. For any $\ell\ge0$, let $L_\ell$ denote the linear graph with $\ell$ edges and $\ell+1$ vertices. 
\[
    L_3:\quad \bullet \overline{\qquad} \bullet\overline{\qquad} \bullet\overline{\qquad} \bullet
\]
We say that $L_\ell$ has \emph{length} $\ell$. For example, $L_n$ is equal to $C_h$ minus one edge.

Consider what happens when we delete $k+1$ edges from $C_h$ where $0\le k\le n$. The resulting graph $G$ will have $k+1$ linear components isomorphic to $L_{\lambda_i}$ with nonnegative lengths $\lambda_0\le \lambda_1\le\cdots\le \lambda_k$ adding up to $\sum \lambda_i=n-k$. We write $\lambda\vdash n-k$ keeping in mind that each $\lambda_i$ may be zero and we refer to $\lambda=(\lambda_0,\cdots,\lambda_k)$ as a \emph{nonnegative partition} (of $n-k$).
Given such a $\lambda$, let $\cS_h(\lambda)$ denote the set of all subgraphs $G\subset C_h$ isomorphic to $\coprod L_{\lambda_i}$. We will obtain two formulas for the size of $\cS_h(\lambda)$ by examining the set $\widetilde\cS_h(\lambda)$ of all pairs $(G,e)$ where $G\in \cS_h(\lambda)$ and $e$ is an edge of $C_h$ not in $G$.

\begin{lemma}\label{lem: size of Sh(lambda)}
For any partition $\lambda=(\lambda_0\le\cdots\le\lambda_k)$ of $n-k$ into $k+1$ possibly empty parts, the size of the set $\cS_h(\lambda)$ is
\[
	|\cS_h(\lambda)|=\frac{k!h}{\prod n_p!}
\]
where $n_p$ is the number of parts $\lambda_i$ of size $p\ge0$.
\end{lemma}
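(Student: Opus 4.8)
The plan is to count the set $\widetilde\cS_h(\lambda)$ of pairs $(G,e)$ in two ways, since the lemma already signals that this auxiliary set is the right bookkeeping device. On the one hand, every $G\in\cS_h(\lambda)$ has exactly $k+1$ edges of $C_h$ not in $G$ (we delete $k+1$ edges to get the $k+1$ components), so $|\widetilde\cS_h(\lambda)|=(k+1)\,|\cS_h(\lambda)|$. So it suffices to count $\widetilde\cS_h(\lambda)$ directly and divide by $k+1$; thus I expect the final answer to come out as $|\widetilde\cS_h(\lambda)| = (k+1)!\,h/\prod n_p!$, and the main work is establishing that.

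To count $\widetilde\cS_h(\lambda)$, I would build such a pair from scratch by a sequence of choices. First choose the distinguished deleted edge $e$: there are $h$ ways. Cutting $C_h$ at $e$ turns the cycle into a single path $L_n$ with $n$ edges and $n+1$ vertices, carrying a linear order on its edges. Now $G$ is obtained from this path $L_n$ by deleting $k$ further edges, and the resulting components, read left to right along the path, have some lengths $(\mu_0,\mu_1,\dots,\mu_k)$ which form an \emph{ordered} composition of $n-k$ into $k+1$ nonnegative parts whose underlying multiset is $\lambda$. Conversely, each ordered composition with that multiset, together with the choice of $e$, determines $(G,e)$ uniquely, because specifying where to cut $L_n$ (i.e., the $k$ edges to delete) is the same as specifying the ordered sequence of component lengths. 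Hence
\[
	|\widetilde\cS_h(\lambda)| = h\cdot(\text{number of orderings of the multiset }\{\lambda_0,\dots,\lambda_k\}) = h\cdot\frac{(k+1)!}{\prod_p n_p!}.
\]
Dividing by $k+1$ gives $|\cS_h(\lambda)| = k!\,h/\prod_p n_p!$, as claimed.

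The one point that needs genuine care — and which I'd flag as the main obstacle — is the bijection between pairs $(G,e)$ and (choice of $e$, ordered composition): I must check it is a genuine bijection and not just a surjection, i.e., that no pair $(G,e)$ is produced twice and that the linear order on the components of $L_n\setminus(\text{$k$ edges})$ is well-defined. This is where cutting the cycle at $e$ matters: once $e$ is fixed, $C_h\setminus e$ has an honest left-to-right orientation (up to the global reflection, which we can pin down by orienting the cycle once and for all), so the components of $G$ inherit a canonical linear order and the composition $(\mu_0,\dots,\mu_k)$ is unambiguous. Conversely, from $e$ and $(\mu_0,\dots,\mu_k)$ one reconstructs which $k$ edges of $C_h\setminus e$ were removed (the edge after the first $\mu_0$ edges, then after the next $\mu_1$, etc.), hence reconstructs $G$; distinct compositions give distinct edge-deletion patterns, hence distinct $G$. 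I would also note the degenerate bookkeeping: a zero part $\mu_i=0$ corresponds to two consecutively deleted edges with no edge between them (an isolated vertex component $L_0$), and the count of orderings of the multiset is the standard multinomial $(k+1)!/\prod_p n_p!$ including $p=0$ among the parts. Everything else is routine, and the two-way count of $\widetilde\cS_h(\lambda)$ then yields the formula.
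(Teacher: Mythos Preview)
Your argument is correct and follows essentially the same route as the paper: both count $\widetilde\cS_h(\lambda)$ two ways, first as $(k+1)|\cS_h(\lambda)|$ and then, after fixing the distinguished edge $e$, via the multinomial $\binom{k+1}{n_0,\dots,n_d}=(k+1)!/\prod n_p!$ counting ordered compositions with multiset $\lambda$. The paper simply invokes ``a standard counting argument'' where you spell out the cut-the-cycle-and-read-left-to-right bijection.
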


\begin{proof}
We measure the size of the set $\widetilde\cS_h(\lambda)=\{(G,j)\}$ in two ways. For each $G\in \cS_h(\lambda)$, there are $k+1$ choices for $e$. Thus $|\widetilde\cS_h(\lambda)|=(k+1)|\cS_h(\lambda)|$.

For each edge $e$ in $C_h$, the number of possible $G$ is given by a multinomial by a standard counting argument and we get
\[
	|\widetilde\cS_h(\lambda)|=(k+1)|S_h(\lambda)|=h\binom{k+1}{n_0,n_1,\cdots,n_d}
\]
where $d$ is the maximum value of $\lambda_i$. Division by $k+1$ gives the lemma.
\end{proof}

\begin{remark}\label{rem: sum of Sh(lambda)}
It follows directly from the definition that $\sum_{\lambda\vdash\,n-k}|\cS_h(\lambda)|=\binom{h}{k+1}$.
\end{remark}

A recursive formula for the size of $\cS_h(\lambda)$ is given as follows. For $(G,e)\in \widetilde\cS_h(\lambda)$, consider what happens when we add $e$ back to $G$. Then two parts of $G$ of size, say, $\lambda_i$ and $\lambda_j$ are ``fused'' together to form a new component of size $\lambda_i+\lambda_j +1$. Let $G'=G\cup \{e\}$ denote this new subgraph of $C_h$. Then $G'\in \cS_h(\lambda')$ where
\[
	\lambda'=\{
	\lambda_i+\lambda_j +1,\lambda_0,\cdots, \widehat{\lambda_i},\cdots, \widehat{\lambda_j},\cdots,\lambda_k\}.
\]
In Figure \ref{Fig: clock face}, $a,b=\lambda_i,\lambda_j$ are $1,2$. So, $c=\lambda_i+\lambda_j+1=4$. Since $a\neq b$ in this case, there are two edges in $L_c$ which can be deleted to obtain $L_a\coprod L_b$. In Figure \ref{Fig: clock face} these are $e,e'$. If $n_c'$ denotes the multiplicity of $c$ in the partition $\lambda'$, there are $2n_c'$ edges in any $G'\in \cS_h(\lambda')$ so that $G'\backslash e\in \cS(\lambda)$. If $a=b$, there is only one way to delete an edge from $L_c$ to obtain $L_a\coprod L_a$. We must delete the middle edge of $L_c$. Thus, there are $2n_c'X(a,b)$ edges in $G'$ which, when deleted, give a graph in $\cS_h(\lambda)$ where
\[
	X(a,b)=\begin{cases} \frac12 & \text{if }a= b\\
    1& \text{otherwise .}
    \end{cases}
\]

\begin{lemma}\label{lem: fusion of edges}
Let $\lambda$ be a nonnegative partition of $n-k$ into $k+1$ nonnegative parts. Then
\[
	(k+1)|\cS_h(\lambda)|=\sum_{\lambda'} |\cS_h(\lambda')| n_c' 2X(a,b)
\]
where the sum is over all $\lambda'$ obtained from $\lambda$ by fusing two parts of size, say $a,b$, in $\lambda$ into one new part of size $c=a+b+1$ in $\lambda'$.
\end{lemma}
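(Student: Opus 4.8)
\emph{Proof proposal.} The plan is to count the set $\widetilde\cS_h(\lambda)$ of pairs $(G,e)$, with $G\in\cS_h(\lambda)$ and $e$ an edge of $C_h$ not lying in $G$, in two ways, the first of which produces the left-hand side. Since each such $G$ has $\sum\lambda_i=n-k$ edges, there are $h-(n-k)=k+1$ edges of $C_h$ missing from $G$, so $|\widetilde\cS_h(\lambda)|=(k+1)|\cS_h(\lambda)|$. (One may assume $k\ge1$; for $k=0$ there is a single part and nothing to fuse.)

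For the second count I would sort the pairs $(G,e)$ according to the fused graph $G'=G\cup\{e\}$, exactly along the lines of the discussion preceding the lemma. First one checks that $G'\in\cS_h(\lambda')$ for some $\lambda'$ obtained from $\lambda$ by fusing two parts: because $k\ge1$, the subgraph $G$ is a \emph{proper} subgraph of the path $C_h\setminus\{e\}$, so the two endpoints of $e$ lie in two distinct components of $G$, of sizes $a$ and $b$ say; adding $e$ back fuses exactly these two into a single component $L_c$ with $c=a+b+1$ and leaves all other components unchanged, so $\lambda'$ is $\lambda$ with the parts $a,b$ replaced by $c$. It then remains to count, for a fixed $G'\in\cS_h(\lambda')$, the edges $f$ of $G'$ with $G'\setminus\{f\}\in\cS_h(\lambda)$. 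Deleting the $i$-th edge of a path component $L_m$ of $G'$ replaces the part $m$ by the two parts $i-1$ and $m-i$; comparing the number of parts equal to $c$ before and after this operation, and using $c=a+b+1>a,b$, forces $m=c$ and $\{i-1,\,m-i\}=\{a,b\}$. Within one component of size $c$ there are exactly two such edges if $a\ne b$ and exactly one (the middle edge) if $a=b$, i.e.\ $2X(a,b)$ edges; as $G'$ has $n_c'$ components of size $c$, there are $n_c'\,2X(a,b)$ admissible edges in all. Summing over the $G'\in\cS_h(\lambda')$ and over $\lambda'$,
\[
	|\widetilde\cS_h(\lambda)|=\sum_{\lambda'}\ \sum_{G'\in\cS_h(\lambda')} n_c'\,2X(a,b)=\sum_{\lambda'}|\cS_h(\lambda')|\,n_c'\,2X(a,b),
\]
and combining this with $|\widetilde\cS_h(\lambda)|=(k+1)|\cS_h(\lambda)|$ gives the lemma.

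The one step deserving care — and the only point where the argument is not the same bookkeeping as in the proof of Lemma \ref{lem: size of Sh(lambda)} — is the fiber count: one must rule out, via the multiplicity-of-$c$ comparison above, the possibility that cutting some component of $G'$ of size $\ne c$ could coincidentally land back in $\cS_h(\lambda)$, and one must correctly pick up the factor $X(a,b)=\frac12$ in the case $a=b$, where the only admissible cut is the middle edge of $L_c$.
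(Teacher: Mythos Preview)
Your proposal is correct and follows exactly the paper's approach: both sides count $|\widetilde\cS_h(\lambda)|$, the left by choosing a missing edge and the right by grouping pairs $(G,e)$ according to the fused graph $G'=G\cup\{e\}$. The paper's own proof is a two-sentence pointer to this double count (relying on the discussion preceding the lemma), whereas you have spelled out the fiber computation and the $k\ge1$ caveat explicitly; there is no substantive difference in strategy.
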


\begin{proof}
Both sides count the size of $\widetilde\cS_h(\lambda)$. The right hand side counts the number of pairs $(G',e)$ where $e\in G'\subset C_h$ and $G=G'\backslash e\in \cS_h(\lambda)$.
\end{proof}

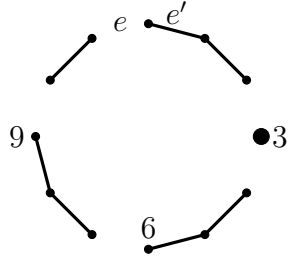
\begin{figure}[htbp]
\begin{center}
\begin{tikzpicture}[scale=1.5]
\coordinate (A0) at (0,1);
\coordinate (E) at (-.25,1);
\coordinate (E2) at (.25,1.1);
\coordinate (A1) at (0.5,0.87);
\coordinate (A2) at (0.87,.5);
\coordinate (A3) at (1,0);
\coordinate (A4) at (0.87,-.5);
\coordinate (A5) at (0.5,-0.87);
\coordinate (A6) at (0,-1);
\coordinate (A11) at (-0.5,0.87);
\coordinate (A10) at (-0.87,.5);
\coordinate (A9) at (-1,0);
\coordinate (A8) at (-0.87,-.5);
\coordinate (A7) at (-0.5,-0.87);
\draw (E) node{$e$};
\draw (E2) node{$e'$};
\begin{scope}
	\foreach \x in {A0,A1,A2,A4,A5,A6,A7,A8,A9,A10,A11}
	\draw[fill] (\x) circle[radius=1pt];
	\draw[fill] (A3) circle[radius=2pt];
	\draw (A9) node[left]{$9$};
	\draw (A3) node[right]{$3$};
	\draw (A6) node[above]{$6$};
	\draw[very thick] (A0)--(A1)--(A2);
	\draw[very thick] (A4)--(A5)--(A6);
	\draw[very thick] (A10)--(A11);
	\draw[very thick]  (A7)--(A8)--(A9);
\end{scope}
\end{tikzpicture}
\caption{Here $h=12$ with $k+1=5$ edges deleted. This leaves $n-k=7$ edges in $5$ parts with one part empty (at $3$). The parts have sizes $\lambda=(0,1,2,2,2)$. So, $n_0=1,n_1=1,n_2=3$. If the deleted edge $e$ is put back we obtain a new graph $G'$ in $\cS_{12}(\lambda')$ where $\lambda'=\{0,2,2,4\}$.}
\label{Fig: clock face}
\end{center}
\end{figure}

Lemma \ref{lem: size of Sh(lambda)} is illustrated in Figure \ref{Fig: clock face} where we have
\[
	|\cS_{12}(0,1,2,2,2)|=\frac{4!12}{3!}=48
\]
which can also be obtained visually: There are 12 possibilities for the isolated vertex, then 4 possible locations for the single edge.

To see Lemma \ref{lem: fusion of edges} in this example, note that there are four possible $\lambda'$ giving:
\begin{enumerate}
    \item $|\cS_{12}(2,2,2,2)| \,n_2' 2X(0,1)=3\cdot 4\cdot 2=24$
\item $|\cS_{12}(1,2,2,3)| \,n_3' 2X(0,2)=36\cdot 1\cdot 2=72$
    \item  $|\cS_{12}(0,2,2,4)| \,n_4' 2X(1,2)=36\cdot 1\cdot 2=72$
\item  $|\cS_{12}(0,1,2,5)|\, n_5' 2X(2,2)=72\cdot 1\cdot1=72$.
\end{enumerate}
These add up to $240=5\cdot 48$ as claimed.

The numbers $|S_h(\lambda)|$ will be used to count exceptional sequences of length $k$ for $A_n$.


\section{Counting exceptional sequences}\label{ss22}

Let $\Lambda$ be a hereditary algebra of type $A_n$ with any orientation. For $1\le k\le n$ and any nonnegative partition $\lambda=(\lambda_0\le\lambda_1\le\cdots\le\lambda_k)$ of $n-k$ into $k+1$ parts of size $\lambda_i\ge0$, let $\cN_h(\lambda)$ be the set of exceptional sequences $E_\ast=(E_k,\cdots,E_1)$ of length $k$ for $\Lambda$ whose right perpendicular category $(E_k\oplus\cdots\oplus E_1)^\perp$ is of type 
\[
	A_\lambda:=\prod_{\lambda_i>0} A_{\lambda_i}.
\]
Let $\cN_h^p(\lambda)$ be the subset of $\cN_h(\lambda)$ consisting of exceptional sequences $(E_k,\cdots,E_1)$ where $E_k$ is a projective object of $\cE'=(E_{k-1}\oplus\cdots\oplus E_1)^\perp$. Then we will prove the following.

\begin{theorem}\label{thm: Nh(lambda)} The sizes of the sets $\cN_h(\lambda)$, $\cN_h^p(\lambda)$ are
\[
	|\cN_h(\lambda)|=h^{k-1}|\cS_h(\lambda)|=\frac{k!h^k}{\prod n_p!}
\]
\[
|\cN_h^p(\lambda)|=h^{k-2}(k+1)|\cS_h(\lambda)|=\frac{(k+1)!h^{k-1}}{\prod n_p!}
\]
where $n_p$ is the number of parts $\lambda_i$ equal to $p$.
\end{theorem}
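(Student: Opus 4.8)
The plan is to set up an induction on $k$, the length of the exceptional sequence, using the recursive structure of exceptional sequences via perpendicular categories. Recall the basic fact (Schofield, Crawley-Boevey) that an exceptional sequence $(E_k,\dots,E_1)$ of length $k$ for $\Lambda$ is the same as a choice of an exceptional object $E_1$ together with an exceptional sequence $(E_k,\dots,E_2)$ of length $k-1$ in the perpendicular category $E_1^\perp$, which for type $A_n$ is again a product of type $A$ categories — concretely, if $E_1$ is chosen so that $E_1^\perp$ has type $A_\mu$ for some nonnegative partition $\mu$ of $n-1$ into parts, then we are reduced to counting length-$(k-1)$ exceptional sequences in $A_\mu$. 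The key combinatorial input is a bijection-style bookkeeping matching the "fusion of parts" move on partitions (Lemma~\ref{lem: fusion of edges}) with the operation on type-$A$ perpendicular categories of restoring the object $E_1$: removing $E_1$ from the sequence merges the perpendicular-type data exactly as adding back a deleted edge fuses two linear components of $C_h$.

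First I would prove the formula for $|\cN_h(\lambda)|$. The base case $k=1$: an exceptional sequence of length $1$ is just a choice of indecomposable $E_1$, and the type of $E_1^\perp$ is determined by where the "support interval" of $E_1$ sits inside the $A_n$ quiver; counting indecomposables of $A_n$ whose complement has a prescribed type $A_\lambda$ (with $\lambda\vdash n-1$ into two parts) reduces to counting subintervals, which I would match with $|\cS_h(\lambda)|=\frac{1\cdot h}{\prod n_p!}$ by the clock-face picture (an indecomposable of $A_n$ corresponds to deleting two edges from $C_h$, i.e. to a pair in $\widetilde{\cS}_h$). For the inductive step, I would write
\[
	|\cN_h(\lambda)| \;=\; \sum_{\mu}\; (\text{number of }E_1\text{ with }E_1^\perp\text{ of type }A_\mu\text{ inducing }\lambda)\cdot |\cN_{h}(\mu)|,
\]
where $\mu$ ranges over partitions of $n-1$ obtained by the appropriate splitting, apply the inductive hypothesis $|\cN_h(\mu)| = \frac{(k-1)!\,h^{k-1}}{\prod n_p(\mu)!}$, and then invoke Lemma~\ref{lem: fusion of edges} (together with Lemma~\ref{lem: size of Sh(lambda)}) to collapse the sum to $\frac{k!\,h^k}{\prod n_p!}$. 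The factor of $h=n+1$ per step and the factor $k$ (versus $k+1$ in Lemma~\ref{lem: fusion of edges}) are exactly what the two lemmas deliver, with the $2X(a,b)$ weights accounting for how many indecomposables in a component realize a given split.

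Second, for $|\cN_h^p(\lambda)|$, I would peel off $E_k$ rather than $E_1$: the condition is that $E_k$ is \emph{projective} in $\cE' = (E_{k-1}\oplus\cdots\oplus E_1)^\perp$. Given the length-$(k-1)$ sequence $(E_{k-1},\dots,E_1)$ with $\cE'$ of type $A_\mu$, the number of choices of such a projective $E_k$ making the whole thing an exceptional sequence with $(E_k\oplus\cdots\oplus E_1)^\perp$ of type $A_\lambda$ is controlled by the indecomposable projectives of $A_\mu$ and their perpendicular categories — and the crucial point is that a projective in $A_m$ has perpendicular category of type $A_{m-1}$ (a single interval shrinks from one end), so the count is more rigid than in the general step: instead of the full fusion sum one gets essentially $(k+1)$ times a single term, which is where the extra factor $(k+1)$ and the drop $h^{k-1}\to$ one fewer power of $h$ come from. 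Then $|\cN_h^p(\lambda)| = h^{k-2}(k+1)|\cS_h(\lambda)|$ follows by substituting the first formula.

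I expect the main obstacle to be the bookkeeping in the inductive step for $|\cN_h(\lambda)|$: precisely matching the weighted count of indecomposables $E_1$ (indexed by which component of $A_\mu$ they live in and how they split it, with the $a=b$ symmetry) against the weights $n_c'\,2X(a,b)$ in Lemma~\ref{lem: fusion of edges}, and keeping the empty parts $\lambda_i=0$ consistently in play, since those correspond to "virtual" $A_0$ factors that the perpendicular-category description must be set up to allow. A secondary subtlety is verifying that every exceptional sequence of type $A_n$ does have perpendicular category a product of type-$A$'s with the right total rank — standard, but it must be stated — and that the recursion on $E_1$ versus the recursion on $E_k$ are compatible (both are, by Schofield's results on perpendicular categories), so that the two formulas can be proved in tandem.
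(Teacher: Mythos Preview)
Your plan for $|\cN_h^p(\lambda)|$ --- peel off $E_k$, count projective choices in $\cE'=(E_{k-1}\oplus\cdots\oplus E_1)^\perp$, and reduce via the fusion identity of Lemma~\ref{lem: fusion of edges} --- is exactly what the paper does. One correction: it is \emph{not} true that a projective $P_i$ in $A_m$ has $P_i^\perp\cong A_{m-1}$; in general $P_i^\perp\cong A_{i-1}\times A_{m-i}$, so the split $c\mapsto(a,b)$ is just as general as in the non-projective case. What changes is only the \emph{number} of objects realizing a given split: $2X(a,b)$ projectives versus $(c+1)X(a,b)$ arbitrary indecomposables in each $A_c$ component (Lemma~\ref{lem: recursive formula for no. of k-exc seq of type An}). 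It is precisely the weight $2X(a,b)$ that matches Lemma~\ref{lem: fusion of edges} and collapses the sum to $(k+1)|\cS_h(\lambda)|$.

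The genuine gap is in your recursion for $|\cN_h(\lambda)|$. You peel off $E_1$ (the rightmost term); then $E_1^\perp$ has type $A_\mu$ with $\mu$ a \emph{two-part} partition of $n-1$, and the residual sequence $(E_k,\dots,E_2)$ lives in the \emph{product} $A_{\mu_0}\times A_{\mu_1}$, not in $A_n$. The inductive hypothesis you invoke, $|\cN_h(\mu)|=(k-1)!\,h^{k-1}/\prod n_p(\mu)!$, counts length-$(k-1)$ sequences in $A_n$ with perpendicular type $A_\mu$ --- this requires $\mu$ to have $k$ parts and $\mu\vdash n-k+1$, not $\mu\vdash n-1$ in two parts. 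Your displayed recursion conflates these two different objects and does not close. The paper avoids this by peeling off $E_k$ for \emph{both} formulas: then $(E_{k-1},\dots,E_1)$ is a length-$(k-1)$ sequence still in $A_n$, with perpendicular type $A_{\lambda'}$ for some $\lambda'\vdash n-k+1$ in $k$ parts, and the induction applies directly. The resulting recursion for $|\cN_h(\lambda)|$ carries weights $n_c'(c+1)X(a,b)$ rather than $n_c'\,2X(a,b)$, so Lemma~\ref{lem: fusion of edges} does \emph{not} collapse it; the paper instead re-indexes over pairs $i<j$ and finishes with the elementary identity $\sum_{0\le i<j\le k}(\lambda_i+\lambda_j+2)=kh$, which is the step your plan is missing.
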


For example, take $n=k=3$. Then $\lambda=(0,0,0,0)$ with $n_0=4$. Since $k+1=h$, Theorem \ref{thm: Nh(lambda)} states that
\[
	|\cN_h^p(\lambda)|=|\cN_h(\lambda)|=|\cN_4(0,0,0,0)|=4^{2}|\cS_4(\lambda)|=\frac{3!4^3}{4!}=4^2
\]
as expected. Since $\sum_\lambda |\cS_h(\lambda)|=\binom h{k+1}$, this implies the following well-known formula:

\begin{corollary}\label{cor: number of exc seq of type A}
The number of length $k$ exceptional sequences of type $A_n$ is $\binom{h}{k+1}h^{k-1}$.
\end{corollary}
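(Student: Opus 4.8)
The plan is to obtain the total count by summing the partition-refined count of Theorem~\ref{thm: Nh(lambda)} over all admissible types of the right perpendicular category, and then collapsing the resulting sum with Remark~\ref{rem: sum of Sh(lambda)}. First I would record the structural input behind the definition of the sets $\cN_h(\lambda)$: for any length-$k$ exceptional sequence $(E_k,\cdots,E_1)$ of type $A_n$, the right perpendicular category $(E_k\oplus\cdots\oplus E_1)^\perp$ is hereditary of rank $n-k$ and is a product of linearly oriented type-$A$ algebras. Concretely, realizing $A_n$ as the path $L_n$, i.e. the cycle $C_h$ with one edge deleted, the perpendicular category is cut out by removing $k$ further edges, splitting $L_n$ into $k+1$ linear pieces whose lengths form a nonnegative partition $\lambda=(\lambda_0\le\cdots\le\lambda_k)$ of $n-k$ into $k+1$ (possibly empty) parts; thus the category is exactly of type $A_\lambda$. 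Consequently each length-$k$ exceptional sequence lies in precisely one $\cN_h(\lambda)$, and the set of all such sequences is the disjoint union $\coprod_{\lambda\vdash\,n-k}\cN_h(\lambda)$ indexed by these partitions.

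Next I would simply add up. By Theorem~\ref{thm: Nh(lambda)} we have $|\cN_h(\lambda)|=h^{k-1}|\cS_h(\lambda)|$ for every $\lambda$, so the total number of length-$k$ exceptional sequences is
\[
	\sum_{\lambda\vdash\,n-k}|\cN_h(\lambda)|=h^{k-1}\sum_{\lambda\vdash\,n-k}|\cS_h(\lambda)|.
\]
Applying Remark~\ref{rem: sum of Sh(lambda)}, which states $\sum_{\lambda\vdash\,n-k}|\cS_h(\lambda)|=\binom{h}{k+1}$, the right-hand side collapses to $\binom{h}{k+1}h^{k-1}$, which is the claimed formula.

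The only real content, and the step I expect to require the most care, is the disjoint-union decomposition asserted in the first paragraph. Disjointness is immediate, since $\lambda$ is determined by the isomorphism type of the perpendicular category. Exhaustiveness amounts to verifying that the perpendicular category of an arbitrary length-$k$ exceptional sequence is genuinely of type $A_\lambda$ for a partition of $n-k$ into exactly $k+1$ parts — in particular that it has rank $n-k$ and at most $k+1$ components (the remaining parts being padded with zeros). Both facts are standard for hereditary algebras of type $A$, and in this paper's framework they are precisely the content of the subgraph-of-$C_h$ correspondence established in the course of proving Theorem~\ref{thm: Nh(lambda)}; once that correspondence is in hand, the two displayed steps above are purely formal.
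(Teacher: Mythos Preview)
Your proposal is correct and follows exactly the paper's approach: the paper derives the corollary from Theorem~\ref{thm: Nh(lambda)} in one line by noting that $\sum_\lambda |\cS_h(\lambda)|=\binom{h}{k+1}$ (Remark~\ref{rem: sum of Sh(lambda)}), which is precisely your summation step. Your additional discussion of why the $\cN_h(\lambda)$ partition the set of length-$k$ exceptional sequences is more explicit than the paper (which takes this as implicit in the definition of $\cN_h(\lambda)$), though your geometric picture of ``removing $k$ further edges from $L_n$'' is heuristic rather than something the paper actually establishes as a bijection---the paper only shows the \emph{sizes} $|\cN_h(\lambda)|$ and $h^{k-1}|\cS_h(\lambda)|$ agree, not that the sequences correspond directly to subgraphs.
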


Theorem \ref{thm: Nh(lambda)} is proved by induction on $k$ starting with the case $k=1$ which illustrates the main step in the proof. If $E_1=P_i$ is the $i$th projective module, then $E_1^\perp$ consists of representations of the $A_n$ quiver whose support does not contain vertex $i$. In that case, $\lambda=(i-1,n-i)$. Anything in the $\tau$ orbit of either $P_i$ or $P_{n-i+1}$ has an equivalent perpendicular category and there are exactly $h=n+1$ elements in the union of these two $\tau$ orbits except in the case when $i=\frac h2$ in which case $P_i=P_{n-i+1}$ and there is only one $\tau$ orbit which has size $\frac h2$. Thus, for $0\le a\le b$ with $a+b+1=n$ we have $\cN_h(a,b)|=hX(a,b)$ where we recall that $X(a,b)=\frac12$ or $1$ depending on whether $a=b$ or not. Since each $\tau$ orbit has only one projective object, $\cN_h^p(a,b)=2X(a,b)$. Thus, $|\cN_h(a,b)|=|\cS_h(a,b)|$ and $|\cN_h^p(a,b)|=\frac2h|\cS_h(a,b)|$ proving the theorem in the case $k=1$. The rest of the proof is by induction on $k$ using the following lemma.

\begin{lemma}\label{lem: recursive formula for no. of k-exc seq of type An}
Given an exceptional sequence $(E_{k-1},\cdots,E_1)$ for $A_n$ of length $k-1$ with perpendicular category $\cE'=(E_{k-1}\oplus \cdots\oplus E_1)^\perp$ of type $A_{\lambda'}$, $\lambda'=(\lambda_1',\cdots,\lambda_k')$, the number of ways to add one more term ($E_k$ on the left end) to get an exceptional sequence of length $k$ with perpendicular category of type $A_\lambda$ is equal to
\[
	n_c'(c+1)X(a,b)
\]
when $\lambda=(a,b,\lambda_1',\cdots,\widehat{\lambda_p'},\cdots,\lambda_k')$, $\lambda_p'=c=a+b+1$ and $0\le a\le b$ where $X(a,b)$ is defined above and $n_c'$ is the number of parts $\lambda_p'$ which are equal to $c$. The number of such $E_k$ which are projective in the perpendicular category $\cE'$ is $2n_c'X(a,b)$.
\end{lemma}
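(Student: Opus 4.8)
The plan is to reduce the statement about exceptional sequences to the combinatorics of $\tau$-orbits inside the perpendicular category $\cE'$. The key observation is that $\cE'$ is equivalent, as an abelian category, to $\mathrm{mod}\,\Lambda'$ for a hereditary algebra $\Lambda'$ of type $A_{\lambda'}=\prod_{\lambda_i'>0} A_{\lambda_i'}$, and adding a term $E_k$ on the left to the exceptional sequence $(E_{k-1},\dots,E_1)$ amounts to choosing an exceptional object $E_k\in\cE'$ whose own right perpendicular category (computed inside $\cE'$) is of type $A_\lambda$. So the problem is intrinsic to a single connected or disconnected $A_m$-hereditary algebra: count the exceptional objects $M$ in $\mathrm{mod}\,A_m$ whose perpendicular category $M^\perp$ has a prescribed type, and among those count how many are projective.

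First I would record the case analysis from the $k=1$ discussion, now applied inside $\cE'$. An exceptional object $M$ of $\mathrm{mod}\,A_m$ (for $m=\lambda_p'=c$) has a perpendicular category $M^\perp$ of type $A_a\times A_b$ with $a+b+1=c=m$; moreover $M^\perp$ depends only on the $\tau$-orbit of $M$ within the stable part, and the union of the $\tau$-orbits giving a fixed unordered pair $\{a,b\}$ has exactly $c+1$ elements when $a\neq b$ and $\tfrac{c+1}{2}$ elements when $a=b$ — that is, $(c+1)X(a,b)$ elements. Exactly one object in each $\tau$-orbit is projective in $M^\perp$... wait, more precisely: each relevant $\tau$-orbit contains exactly one object that is projective in the ambient category $\mathrm{mod}\,A_m=\mathrm{mod}\,A_c$, and there are two such $\tau$-orbits when $a\neq b$ (the orbits of $P_{a+1}$ and $P_{b+1}$) and one when $a=b$, giving $2X(a,b)$ projective choices. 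This is exactly the computation already carried out in the $k=1$ paragraph, so I would just transcribe it, replacing ``$\mathrm{mod}\,\Lambda$'' by ``the $A_c$-factor of $\cE'$.''

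Next I would account for the multiplicity factor $n_c'$. The partition $\lambda$ is obtained from $\lambda'$ by splitting one part $\lambda_p'=c$ into two parts $a,b$; but $\lambda'$ may have several parts equal to $c$, and a priori the split could occur in any of the $n_c'$ corresponding $A_c$-factors of $\cE'=\prod A_{\lambda_i'}$. An exceptional object $E_k\in\cE'$ supported entirely in one $A_c$-factor leaves the other factors untouched, so the $n_c'$ choices of which factor to act in are genuinely distinct and each contributes $(c+1)X(a,b)$ objects, for a total of $n_c'(c+1)X(a,b)$; likewise $n_c' \cdot 2X(a,b)$ for the projective ones. I would also need to note that no other exceptional $E_k\in\cE'$ yields perpendicular type $A_\lambda$: an indecomposable exceptional object of a product category lies in a single factor, and acting in a factor $A_{\lambda_i'}$ with $\lambda_i'\ne c$ would produce a different partition. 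This exhausts all cases and gives the claimed count.

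The main obstacle is the precise bookkeeping of $\tau$-orbits and projectivity inside $M^\perp$ versus inside the ambient $A_c$: one must be careful that ``$E_k$ projective in $\cE'$'' is the right notion (projective in $(E_{k-1}\oplus\cdots\oplus E_1)^\perp$, as in the definition of $\cN_h^p$) and matches ``$M=E_k$ projective in $\mathrm{mod}\,A_c$'' under the equivalence — equivalently, that the projective objects of a wide subcategory $\mathrm{mod}\,A_c\subset\cE'$ correspond to those exceptional objects that are projective for the induced structure. Once the $k=1$ computation is seen to be exactly the statement of the lemma applied to one $A_c$-factor, the rest is the (routine) product-decomposition argument and the multiplicity count; so I expect the write-up to be short, essentially a pointer back to the $k=1$ analysis together with the factor-counting.
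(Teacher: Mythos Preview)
Your proposal is correct and follows essentially the same route as the paper: reduce to the $k=1$ computation inside a single $A_c$-factor of $\cE'\cong\prod A_{\lambda_i'}$, obtaining $(c+1)X(a,b)$ exceptional objects and $2X(a,b)$ projectives there, then multiply by the $n_c'$ choices of factor. The paper's proof is exactly this argument compressed into a few lines, so your write-up will simply be a slightly more explicit version of it.
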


\begin{proof} The added term $E_k$ in the exceptional sequence belongs to some $A_c$ and, on that subquiver, $E_k^\perp=A_a\times A_b$ where $a+b+1=c$ and $a,b\ge0$. There are $n_c'$ components of the perpendicular category $\cE'=\prod A_{\lambda_i'}$ of type $A_c$. Each such component has $(c+1)X(a,b)$ objects $E$ so that $E^\perp=A_a\times A_b$. Of these, $2X(a,b)$ are projective in $A_c$. Since these are all the indecomposable objects and projective indecomposable objects of the perpendicular category $\cE'$ up to isomorphism, the two formulas hold.
\end{proof}

\begin{proof}[Proof of Theorem \ref{thm: Nh(lambda)}]
The theorem states that $|\cN_h(\lambda)|=|\cS_h(\lambda)|h^{k-1}$ and $|\cN_h^p(\lambda)|=\frac{k+1}h|\cN_h(\lambda)|$. We have verified both equations for $k=1$. So, suppose that $k\ge2$. By the lemma, we have:
\[
	|\cN_h(\lambda_0,\cdots,\lambda_k)|=\sum_{\lambda'}|\cN_h(\lambda')|n_c'(c+1)X(a,b)
\]
\[
	|\cN_h^p(\lambda_0,\cdots,\lambda_k)|=\sum_{\lambda'}|\cN_h(\lambda')|n_c'2X(a,b)
\]
where both sums are over all $\lambda'$ obtained from $\lambda$ by ``fusing'' two components $a=\lambda_i$ and $b=\lambda_j$ together to form $c=a+b+1$. By induction on $k$, we have $|\cN_h(\lambda')|=h^{k-1}|\cS_h(\lambda')|$. If we insert this into the second equation we get:
\[
    |\cN_h^p(\lambda)|=h^{k-2}\sum_{\lambda'} |\cS_h(\lambda')|n_c'2X(a,b)=h^{k-2}(k+1)|\cS_h(\lambda)|=\frac{(k+1)!h^{k-1}}{\prod n_p!}
\]
where we applied Lemma \ref{lem: fusion of edges} to obtain the second equality.

To compute $|\cN_h(\lambda)|$ we need to change the index of summation in the above sum to $0\le i<j\le n$. However, different values of $i,j$ may give the same $\lambda'$. So, we need to divide by a redundancy factor $M(a,b)$:
\[
	|\cN_h(\lambda)|=\sum_{0\le i<j\le k} |\cN_h(\lambda_i+\lambda_j+1,\lambda_0,\cdots,\widehat{\lambda_i},\cdots,\widehat{\lambda_j},\cdots, \lambda_k)|\left(\frac{n_c'(\lambda_i+\lambda_j+2)X(\lambda_i,\lambda_j)}{M(\lambda_i,\lambda_j)}\right)
\]
where $c=\lambda_i+\lambda_j+1$ and $M(\lambda_i,\lambda_j)$ is the number of times this same term appear in the sum over all $i<j$. Thus
\[
	M(\lambda_i,\lambda_j)=\begin{cases} \frac12 n_{\lambda_i}(n_{\lambda_i}-1) & \text{if } {\lambda_i}={\lambda_j}\\
  n_{\lambda_i}n_{\lambda_j}  & \text{otherwise}
    \end{cases}
\]
Inserting the value $|\cN_h(\lambda')|=h^{k-1}\frac{(k-1)!}{\prod n_p'! }$  which is given by induction, we obtain:
\[
	|\cN_h(\lambda)|=\sum_{0\le i<j\le k} h^{k-1}\frac{(k-1)!}{\prod n_p'! }\left(\frac{n_c'(\lambda_i+\lambda_j+2)X(\lambda_i,\lambda_j)}{M(\lambda_i,\lambda_j)}\right).
\]
But, $\prod n_p'!$ is related to $\prod n_p!$ in the following way since $n_a'=n_a-1$ and $n_b'=n_b-1$ for $a,b=\lambda_i,\lambda_j$ when $\lambda_i\neq\lambda_j$ (or $n_a'=n_a-2$ when $\lambda_i=\lambda_j$) and $n_c'=n_c+1$:
\[
	\prod n_p'=\begin{cases} \frac{n_c'\prod n_p!}{n_{\lambda_i}n_{\lambda_j}} & \text{if } \lambda_i\neq \lambda_j\\
   \frac{n_c'\prod n_p!}{n_{\lambda_i}(n_{\lambda_i}-1)} & \text{if }\lambda_i=\lambda_j
    \end{cases}\quad= \frac{n_c' \prod n_p! X(\lambda_i,\lambda_j)}{M(\lambda_i,\lambda_j)}
\]
So,
\begin{equation}\label{eq in proof of Thm Nh(lambda)}
	|\cN_h(\lambda)|=h^{k-1}\frac{(k-1)!}{\prod n_p!}\sum_{1\le i<j\le k} (\lambda_i+\lambda_j+2)
\end{equation}
It remains to compute the sum $\sum_{i<j} (\lambda_i+\lambda_j+2)$ in \eqref{eq in proof of Thm Nh(lambda)}. To do this we first observe that, since $\lambda$ is a partition of $n-k$, 
\[
	\sum_{i=0}^k(\lambda_i+1)=n-k+k+1=n+1=h.
\]
So,
\[
	\sum_{0\le i<j\le k} (\lambda_i+\lambda_j+2)= 
	\sum_{i\neq j} \frac{\lambda_i+\lambda_j+2}2=
	\sum_{i,j=0}^k\frac{\lambda_i+\lambda_j+2}2-\sum_{i=j}\frac{\lambda_i+\lambda_j+2}2
\]
\[
	=(k+1)\sum \frac{\lambda_i+1}{2}+(k+1)\sum {\lambda_j+\frac12}-\sum(\lambda_i+1)=kh.
\]
Inserting this into \eqref{eq in proof of Thm Nh(lambda)} gives $|\cN_h(\lambda)|=h^k k!/\prod n_p!=|\cS_h(\lambda)|h^{k-1}$ as claimed.
\end{proof}


\section{Probability distribution of relatively projective terms}\label{ss23}

The following corollary and its proof is a generalization of the $k=1$ case from \cite{Ringel}. For $k\le n$, let $B_{k}$ be the event that $E_k$ is relatively projective in a complete exceptional sequence $(E_n,E_{n-1},\cdots,E_1)$. 

We recall a standard tool in probability where a \emph{partition} is a decomposition of a sample space into a countable disjoint union of measurable subsets (events).

\begin{lemma}\label{lem: partition probability}
Given a partition $X_\alpha$ and any event $Y$,
\[
	\mathbb P(Y)=\sum_\alpha \mathbb P(X_\alpha Y)=\sum_\alpha \mathbb P(X_\alpha)\mathbb P(Y|X_\alpha).
\]
\end{lemma}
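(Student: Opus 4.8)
The plan is to reduce both equalities to the single axiom of countable additivity of the probability measure $\mathbb P$ on the underlying sample space $\Omega$, together with the definition of conditional probability; there is no real content beyond bookkeeping, so I would keep this short.

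First I would observe that, because $\{X_\alpha\}$ is a partition of $\Omega$, the sets $X_\alpha\cap Y$ are pairwise disjoint (they are contained in the pairwise disjoint sets $X_\alpha$) and their union is $\big(\bigcup_\alpha X_\alpha\big)\cap Y=\Omega\cap Y=Y$. Thus $Y$ is exhibited as a countable disjoint union of measurable sets, and countable additivity yields
\[
	\mathbb P(Y)=\sum_\alpha \mathbb P(X_\alpha\cap Y)=\sum_\alpha \mathbb P(X_\alpha Y),
\]
which is the first claimed equality, with $X_\alpha Y$ the usual abbreviation for $X_\alpha\cap Y$.

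For the second equality I would invoke, term by term, the definition $\mathbb P(Y\mid X_\alpha)=\mathbb P(X_\alpha Y)/\mathbb P(X_\alpha)$, valid whenever $\mathbb P(X_\alpha)>0$; this rearranges to $\mathbb P(X_\alpha)\mathbb P(Y\mid X_\alpha)=\mathbb P(X_\alpha Y)$, and summing over $\alpha$ matches the middle expression. The only point needing a word of care is an index $\alpha$ with $\mathbb P(X_\alpha)=0$: since $X_\alpha Y\subseteq X_\alpha$ we also have $\mathbb P(X_\alpha Y)=0$, so such terms contribute $0$ on both sides and may be discarded (or given the conventional value $0$ for the product).

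The main ``obstacle'' is therefore nonexistent — this is the classical law of total probability — and in all the applications below the relevant partitions $X_\alpha$ are finite with every $\mathbb P(X_\alpha)>0$, so even the measure-zero caveat can be suppressed.
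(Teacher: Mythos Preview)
Your argument is correct and is exactly the standard proof of the law of total probability. The paper itself does not supply a proof of this lemma at all: it is introduced with ``We recall a standard tool in probability'' and then simply stated, so there is nothing to compare against beyond noting that your write-up fills in the omitted (routine) details.
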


\begin{lemma} For any $k\le n$ and $\lambda=(\lambda_0,\cdots,\lambda_k)$ a partition of $n-k$ into $k+1$ parts we have:
\begin{equation}\label{eq: independence of Bk,Tk}
\mathbb P\left(B_kT_k(\lambda)\right)=\frac{k+1}h\mathbb P\left( T_k(\lambda)\right)
\end{equation}
where $T_k(\lambda)$ is the event that $(E_k\oplus\cdots\oplus E_1)^\perp$ has type $A_\lambda$. In particular, $B_k$ and $T_k(\lambda)$ are independent and $\mathbb P(B_k)=\frac{k+1}h$.
\end{lemma}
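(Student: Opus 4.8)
The plan is to express both $\mathbb P(T_k(\lambda))$ and $\mathbb P(B_kT_k(\lambda))$ through the counts of Theorem \ref{thm: Nh(lambda)} and then take their ratio, the one nontrivial extra ingredient being that the ``tail'' of a complete exceptional sequence contributes a factor that depends on $\lambda$ alone and therefore cancels. First I would fix the probability space as the set $\Omega$ of complete exceptional sequences $(E_n,\dots,E_1)$ of type $A_n$ with the uniform distribution; by Corollary \ref{cor: number of exc seq of type A} (with $k=n$) this is a finite set of positive size, and we will not need its exact cardinality. Given $(E_n,\dots,E_1)\in\Omega$, its truncation $(E_k,\dots,E_1)$ is an exceptional sequence of length $k$, and $(E_n,\dots,E_{k+1})$ is a complete exceptional sequence of the right perpendicular category $\cE'=(E_k\oplus\cdots\oplus E_1)^\perp$. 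If $\cE'$ has type $A_\lambda$, then the number of admissible tails $(E_n,\dots,E_{k+1})$ equals the number $c(\lambda)$ of complete exceptional sequences of $A_\lambda$, a positive quantity depending on $\lambda$ only. Hence the number of elements of $\Omega$ realizing $T_k(\lambda)$ is $c(\lambda)\,|\cN_h(\lambda)|$, and among these the number realizing $B_k$ as well --- i.e. with $E_k$ projective in $(E_{k-1}\oplus\cdots\oplus E_1)^\perp$, a condition that depends only on the truncation --- is $c(\lambda)\,|\cN_h^p(\lambda)|$.

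Dividing by $|\Omega|$ yields $\mathbb P(T_k(\lambda))=c(\lambda)|\cN_h(\lambda)|/|\Omega|$ and $\mathbb P(B_kT_k(\lambda))=c(\lambda)|\cN_h^p(\lambda)|/|\Omega|$. Since $\cS_h(\lambda)$ is nonempty (Lemma \ref{lem: size of Sh(lambda)}), $\cN_h(\lambda)$ is nonempty by Theorem \ref{thm: Nh(lambda)}, so $\mathbb P(T_k(\lambda))>0$ and we may divide; the unknown factors $c(\lambda)$ and $|\Omega|$ cancel and Theorem \ref{thm: Nh(lambda)} gives
\[
\frac{\mathbb P(B_kT_k(\lambda))}{\mathbb P(T_k(\lambda))}=\frac{|\cN_h^p(\lambda)|}{|\cN_h(\lambda)|}=\frac{h^{k-2}(k+1)|\cS_h(\lambda)|}{h^{k-1}|\cS_h(\lambda)|}=\frac{k+1}{h},
\]
which is exactly \eqref{eq: independence of Bk,Tk}.

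To finish, note that as $\lambda$ ranges over the nonnegative partitions of $n-k$ into $k+1$ parts the events $T_k(\lambda)$ form a partition of $\Omega$, since every complete exceptional sequence has a well-defined truncation whose right perpendicular category is of exactly one type $A_\lambda$. Applying Lemma \ref{lem: partition probability} with $Y=B_k$ and using \eqref{eq: independence of Bk,Tk},
\[
\mathbb P(B_k)=\sum_{\lambda\vdash n-k}\mathbb P(B_kT_k(\lambda))=\frac{k+1}{h}\sum_{\lambda\vdash n-k}\mathbb P(T_k(\lambda))=\frac{k+1}{h}.
\]
Substituting this back, \eqref{eq: independence of Bk,Tk} becomes $\mathbb P(B_kT_k(\lambda))=\mathbb P(B_k)\,\mathbb P(T_k(\lambda))$, which is the claimed independence of $B_k$ and $T_k(\lambda)$.

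The step I expect to require the most care is the factorization ``(number of completions of the truncation) $=c(\lambda)$, depending on $\lambda$ alone'': it rests on the standard facts that an exceptional sequence of length $k$ extends to a complete one, that its completions are precisely the complete exceptional sequences of its right perpendicular category, and that this category is determined up to equivalence --- hence up to the value $c(\lambda)$ --- by the type $A_\lambda$. Once this is granted, the rest is bookkeeping plus the already-proved Theorem \ref{thm: Nh(lambda)}, and in particular neither $c(\lambda)$ nor $|\Omega|$ needs to be computed since both cancel in the ratio.
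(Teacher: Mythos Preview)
Your argument is correct and follows essentially the same route as the paper: compute the conditional probability $\mathbb P(B_k\mid T_k(\lambda))$ as the ratio $|\cN_h^p(\lambda)|/|\cN_h(\lambda)|=(k+1)/h$ from Theorem \ref{thm: Nh(lambda)}, then sum over the partition $\{T_k(\lambda)\}$ to get $\mathbb P(B_k)$. The paper's proof is a one-liner that asserts this ratio directly; your version is more careful in that you make explicit why, on the sample space of \emph{complete} exceptional sequences, the conditional probability really equals this ratio of length-$k$ counts---namely, the completion factor $c(\lambda)$ depends only on the type of the perpendicular category and therefore cancels. That is exactly the point the paper leaves implicit, so your expanded treatment is a genuine improvement in rigor rather than a different method.
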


\begin{proof}
By Theorem \ref{thm: Nh(lambda)}, the relative probability of $B_k$ given $T_k(\lambda)$ is
\[
\mathbb P(B_k|T_k(\lambda))=\frac{|\cN_h^p(\lambda)|}{ |\cN_h(\lambda)|}
=\frac{k+1}{ h}.\]
The lemma follows.
\end{proof}


\section{Restricting to exceptional sequences of length $k$} 
The main theorem follows from the following lemma.

\begin{lemma}
For $1\le k_1<k_2<\cdots<k_s\le k$ and $\lambda=(\lambda_0,\cdots,\lambda_k)$, the events $B_{k_1},\cdots,B_{k_s},T_k(\lambda)$ are independent.
\end{lemma}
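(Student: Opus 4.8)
The plan is to prove the statement by induction on $s$, the number of ``relatively projective'' events under consideration, using the recursive structure already established in Theorem~\ref{thm: Nh(lambda)} and Lemma~\ref{lem: recursive formula for no. of k-exc seq of type An}. The base case $s=1$ is exactly the previous lemma, which gives $\mathbb P(B_{k_1}T_k(\lambda)) = \frac{k_1+1}{h}\mathbb P(T_k(\lambda))$ after we account for the fact that $B_{k_1}$ concerns the term $E_{k_1}$ relative to the perpendicular category $(E_{k_1-1}\oplus\cdots\oplus E_1)^\perp$; the point is that conditioning on the type $A_{\mu}$ of that intermediate perpendicular category reduces $B_{k_1}$ to the length-$k_1$ situation inside the hereditary algebra $A_{\mu}$. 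So the first step is to make precise the ``restriction'' principle: for $j \le k$, conditioning on the isomorphism type $A_{\mu}$ of $(E_{j}\oplus\cdots\oplus E_1)^\perp$, the remaining data $(E_k,\cdots,E_{j+1})$ together with the events $B_{j+1},\cdots,B_{k}$ are distributed exactly as a random exceptional sequence of length $k-j$ for the disjoint-type quiver $A_{\mu}$, with the corresponding relative-projectivity events.

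The second step is to carry out the inductive reduction. Write $j = k_s$, the largest index. I would first condition on the type $A_{\nu}$ of $(E_{k_s-1}\oplus\cdots\oplus E_1)^\perp$ (one step below $E_{k_s}$), use the $k=1$-style computation inside the relevant $A_c$ component of $A_\nu$ to peel off the factor $\frac{k_s+1}{h}$ for $B_{k_s}$, and then recognize what remains as a statement about the events $B_{k_1},\cdots,B_{k_{s-1}}$ and a type event — but now for a \emph{shorter} exceptional sequence. More precisely, summing over the ways to fuse, exactly as in the proof of Theorem~\ref{thm: Nh(lambda)}, the quantity $\sum_{\nu}|\cN_h^{p}(\text{refinement})|\cdots$ telescopes so that the probability $\mathbb P(B_{k_1}\cdots B_{k_s}T_k(\lambda))$ factors as $\frac{k_s+1}{h}\cdot \mathbb P(B_{k_1}\cdots B_{k_{s-1}}T_k(\lambda))$; applying the inductive hypothesis to the second factor finishes the argument and simultaneously shows independence of all the $B_{k_i}$ from each other and from $T_k(\lambda)$.

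Alternatively — and this is probably cleaner to write — I would introduce refined sets $\cN_h(\lambda)$ graded by which of a prescribed set of positions $k_1<\cdots<k_s$ carry a relatively projective term, prove by the same induction on $k$ used for Theorem~\ref{thm: Nh(lambda)} that the size of the set with all of $B_{k_1},\cdots,B_{k_s}$ holding and type $A_\lambda$ equals $\left(\prod_{i=1}^s \frac{k_i+1}{h}\right)|\cN_h(\lambda)|$, and then divide by $|\cN_h(\lambda)|$ in the $k$-th step only and by $\sum_\lambda|\cN_h(\lambda)|$ for the unconditioned count. The key identity making the induction go through is again Lemma~\ref{lem: fusion of edges} combined with the ratio $|\cN_h^p(\lambda)|/|\cN_h(\lambda)| = \frac{k+1}{h}$ at each fusion step: because that ratio depends only on the current length and not on $\lambda$, the factors for distinct positions $k_i$ multiply independently, and the type-dependence never interferes.

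The main obstacle I anticipate is bookkeeping in the inductive step: when we peel off $B_{k_s}$ by conditioning on the perpendicular category one level down, we must track that the remaining positions $k_1<\cdots<k_{s-1}$ are still ``the same positions'' inside the shorter sequence and that the event $T_k(\lambda)$ transforms into a sum over types $T_{k-1}(\lambda')$ with fusion weights — precisely the weights appearing in Lemma~\ref{lem: recursive formula for no. of k-exc seq of type An}. Ensuring the weighted sum over $\lambda'$ reassembles correctly (i.e. that the redundancy factors $M(a,b)$ and multiplicities $n_c'$ cancel exactly as in the proof of Theorem~\ref{thm: Nh(lambda)}) is the delicate point; everything else is a direct quotient of the formulas already in hand.
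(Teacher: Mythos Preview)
Your overall plan --- induction on $s$ --- matches the paper's, but you peel off the wrong end, and that choice is what creates all the ``delicate bookkeeping'' you worry about. The paper removes the \emph{smallest} index $k_1$, not the largest $k_s$. Conditioning on the type $T_{k_1}(\lambda')$ of $(E_{k_1}\oplus\cdots\oplus E_1)^\perp$ makes the Markov structure trivial in the forward direction: the future events $B_{k_2},\ldots,B_{k_s},T_k(\lambda)$ depend only on the extension inside a category of type $A_{\lambda'}$, hence only on $\lambda'$, and are therefore conditionally independent of whether $E_{k_1}$ was projective. One then uses the previous lemma, $\mathbb P(B_{k_1}T_{k_1}(\lambda'))=\mathbb P(B_{k_1})\mathbb P(T_{k_1}(\lambda'))$, and Lemma~\ref{lem: partition probability} to sum out $\lambda'$, obtaining
\[
\mathbb P(B_{k_1}\cdots B_{k_s}T_k(\lambda))=\mathbb P(B_{k_1})\,\mathbb P(B_{k_2}\cdots B_{k_s}T_k(\lambda)),
\]
and induction finishes the proof in a few lines --- no fusion weights, no redundancy factors $M(a,b)$, no revisiting the proof of Theorem~\ref{thm: Nh(lambda)}.

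By contrast, your approach of peeling off $k_s$ forces you to separate $B_{k_s}$ from $T_k(\lambda)$ \emph{inside} a product category $A_\nu$ after conditioning on $T_{k_s-1}(\nu)$; this is not supplied by the previous lemma and essentially requires reproving the type-independence one level deeper (or, as you say, rerunning the whole combinatorial argument with refined counts). It can be made to work, but the paper's choice of endpoint sidesteps the issue entirely.
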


\begin{proof}
Let $k'<k$. Then for all partitions $\lambda' \vdash n-k'$ we have by \eqref{eq: independence of Bk,Tk}:
\[
	\mathbb P(T_k(\lambda)|T_{k'}(\lambda'))=\mathbb P(T_{k}(\lambda)|B_{k'}T_{k'}(\lambda'))
\]
since the distribution of types $\lambda$ depends only on $\lambda'$. Similarly, the distribution of $B_{k}$ for $k>k'$ depends only on $\lambda'$. Taking $k'=k_1$ we have, by Lemma \ref{lem: partition probability}, the following.
\[
	\mathbb P(B_{k_1}\cdots B_{k_s}T_{k}(\lambda))=\sum_{\lambda' \vdash \,n-k_1} \mathbb P(B_{k_1}T_{k_1}(\lambda'))\mathbb P(B_{k_2}\cdots B_{k_s}T_{k}(\lambda)|B_{k_1}T_{k_1}(\lambda'))
\]
\[
	= \sum_{\lambda' \vdash \,n-k_1} \mathbb P(B_{k_1})\mathbb P(T_{k_1}(\lambda'))\mathbb P(B_{k_2}\cdots B_{k_s}T_{k}(\lambda)|T_{k_1}(\lambda'))
\]
\[
	=\mathbb P(B_{k_1})\mathbb P(B_{k_2}\cdots B_{k_s}T_{k}(\lambda))
\]
By induction on $s$, these events are all independent.
\end{proof}

\begin{theorem}[Theorem \ref{thm A}]\label{thm: thm A}
For $j\le k\le n$ let $B_{k,j}$ be the event that $E_{j}$ is relatively projective in a random exceptional sequence $(E_k,\cdots,E_1)$ of length $k$. Then $\mathbb P(B_{k,j})=\frac{j+1}{n+1}$. Furthermore, these events (for fixed $k$) are independent.
\end{theorem}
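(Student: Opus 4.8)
The plan is to reduce the statement about length-$k$ exceptional sequences to the already-proved facts about complete exceptional sequences (length $n$), using the partition $\{T_k(\lambda)\}$ as the bridge. The key observation is that an exceptional sequence $(E_k,\dots,E_1)$ of length $k$ is precisely the ``tail'' of a complete exceptional sequence, and the type $A_\lambda$ of the perpendicular category $(E_k\oplus\cdots\oplus E_1)^\perp$ controls everything about how the sequence can be extended on the left. So first I would record that for a complete exceptional sequence, conditioning on $T_k(\lambda)$ and then on the first $k$ terms $(E_k,\dots,E_1)$, the remaining terms $(E_n,\dots,E_{k+1})$ form a complete exceptional sequence for the algebra of type $A_\lambda$ — hence a random complete exceptional sequence of type $A_n$, restricted to its last $k$ terms and conditioned on $T_k(\lambda)$, is a uniformly random exceptional sequence of length $k$ with that perpendicular type. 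This is exactly the content of Theorem \ref{thm: Nh(lambda)}: $|\cN_h(\lambda)| = h^{k-1}|\cS_h(\lambda)|$ shows the conditional distribution of the length-$k$ truncation given $T_k(\lambda)$ is uniform on $\cN_h(\lambda)$, independent of which complete sequence extends it.

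Next I would observe that the event $B_{k,j}$ (that $E_j$ is relatively projective in $(E_{j-1}\oplus\cdots\oplus E_1)^\perp$) depends only on the truncation $(E_j,\dots,E_1)$, hence its distribution in a random length-$k$ sequence equals its distribution in a random complete sequence — that is, $B_{k,j}$ and $B_j$ have the same probability, and more generally the joint distribution of $(B_{k,j})_{j\le k}$ in a random length-$k$ sequence matches that of $(B_j)_{j\le k}$ in a random complete sequence. This identification is where I would be careful: I must check that the uniform measure on length-$k$ sequences is the pushforward of the uniform measure on complete sequences under truncation, which again follows from $|\cN_h(\lambda)| = h^{k-1}|\cS_h(\lambda)|$ together with $\sum_\lambda |\cS_h(\lambda)| = \binom{h}{k+1}$ (Remark \ref{rem: sum of Sh(lambda)}) and Corollary \ref{cor: number of exc seq of type A}: summing $|\cN_h(\lambda)|$ over all $\lambda \vdash n-k$ gives $\binom{h}{k+1}h^{k-1}$, the total number of length-$k$ sequences, and within each fiber over a length-$k$ sequence the number of complete extensions is the constant $h^{k-1}/\text{(number of }\lambda\text{-sequences normalization)}$ — more precisely the number of complete exceptional sequences of type $A_\lambda$ times $h^{\,?}$, which I would spell out via the product formula applied to each $A_{\lambda_i}$.

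Then the theorem follows immediately: by the previous lemma, $B_{k_1},\dots,B_{k_s},T_k(\lambda)$ are independent for $k_1<\cdots<k_s\le k$; marginalizing out $T_k(\lambda)$ (summing over $\lambda \vdash n-k$) gives that $B_{k_1},\dots,B_{k_s}$ are independent in the complete-sequence model, and by the truncation identification the same holds for $B_{k,k_1},\dots,B_{k,k_s}$ in the length-$k$ model, with $\mathbb P(B_{k,j}) = \mathbb P(B_j) = \frac{j+1}{h} = \frac{j+1}{n+1}$ by the earlier lemma. I expect the main obstacle to be the bookkeeping in the truncation identification — verifying cleanly that conditioning a random complete exceptional sequence on its length-$k$ tail yields a uniform random complete exceptional sequence of the perpendicular type, and that this number of extensions is independent of the tail within a fixed $T_k(\lambda)$ — rather than any genuinely new idea; once that is in place, everything reduces to the already-established independence of the $B_k$ and $T_k(\lambda)$.
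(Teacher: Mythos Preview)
Your reduction has a genuine gap: the claim that the uniform measure on length-$k$ exceptional sequences is the pushforward (under truncation) of the uniform measure on complete exceptional sequences is \emph{false}. The fiber over a sequence $(E_k,\dots,E_1)\in\cN_h(\lambda)$ is the set of complete exceptional sequences for the algebra of type $A_\lambda$, and its size depends on $\lambda$. Concretely, take $n=3$, $k=1$: for $\lambda=(0,2)$ each of the $4$ length-$1$ sequences has $3$ completions, while for $\lambda=(1,1)$ each of the $2$ length-$1$ sequences has $2$ completions; the pushforward gives weights $3/16$ and $2/16$, not the uniform $1/6$. So your final step --- ``marginalize out $T_k(\lambda)$ in the complete model, then transfer by the truncation identification'' --- does not go through.

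What does survive is the part you flagged as ``within a fixed $T_k(\lambda)$'': the fiber size is constant on each $\cN_h(\lambda)$, so the \emph{conditional} measure on $T_k(\lambda)$ in the complete model does push forward to the uniform measure on $\cN_h(\lambda)$. Hence the lemma gives
\[
\mathbb P_{\text{length-}k}\bigl(B_{k,j_1}\cdots B_{k,j_s}\,\big|\,\cN_h(\lambda)\bigr)
=\mathbb P_{\text{complete}}\bigl(B_{j_1}\cdots B_{j_s}\,\big|\,T_k(\lambda)\bigr)
=\prod_{i}\frac{j_i+1}{h},
\]
the same value for every $\lambda$. This is exactly what the paper uses: it works directly in the length-$k$ sample space, partitions it by $\{\cN_h(\lambda)\}_{\lambda\vdash n-k}$, and since the conditional probabilities are constant across the partition, summing over $\lambda$ with the length-$k$ weights $\mathbb P(\cN_h(\lambda))$ gives $\mathbb P(B_{k,j_1}\cdots B_{k,j_s})=\prod_i\frac{j_i+1}{h}$. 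The point is that you must sum over $\lambda$ \emph{in the length-$k$ model}, not marginalize in the complete model first and then try to transfer the unconditional statement.
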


\begin{proof}
The number of exceptional sequences of length $k$ is $\sum |\cN_h(\lambda)|$ where the sum is over all partitions $\lambda=(\lambda_0,\cdots,\lambda_k)$ of $n-k$ into $k+1$ parts. For each such $\lambda$, the proportion of those elements of $\cN_h(\lambda)$ in which $E_{j}$ is relatively projective is $\frac{j+1}h$ and these events are independent by the lemma. Since this is a partition of the sample space of all exceptional sequences of length $k$, we can sum over all $\lambda$ to see that the events $B_{k,j}$ are independent with probability $\frac{j+1}h$.
\end{proof}

The independence of probabilities allows us to use the following elementary argument whose proof we include since it is a key step in the argument.

\begin{lemma}\label{lem: probability gives number of clusters} For a random exceptional sequence $(E_k,\cdots,E_1)$ of length $k$, let $C_j$ be the event that the $j$-th term $E_j$ is relatively projective. If the events $C_j$ are independent with probability $P_j$ then the generating function $f(z)=\sum b_j z^j$ for $b_j=$ the number of exceptional sequences of length $k$ with $j$ relatively projective terms is
\[
    f_k(z)=e_k\prod_{j=1}^k (zP_j+1-P_j).
\]
where $e_k$ is the number of exceptional sequences of length $k$.
In particular, the number of signed exceptional sequences of length $k$ is equal to
\[
	f_k(2)=e_k\prod_{j=1}^\ell (1+P_j).
\]
\end{lemma}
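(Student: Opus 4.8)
The plan is to prove the generating function identity $f_k(z)=e_k\prod_{j=1}^k(zP_j+1-P_j)$ directly from the independence hypothesis, and then obtain the count of signed exceptional sequences by specializing $z=2$. The starting point is the observation that a signed exceptional sequence of length $k$ is an exceptional sequence together with a choice of sign at each relatively projective term, so that an exceptional sequence with exactly $j$ relatively projective terms contributes $2^j$ signed exceptional sequences; this is exactly the statement that the number of signed exceptional sequences equals $\sum_j b_j 2^j = f_k(2)$. (I would cite \cite{IS} or the relevant source for the definition of signed exceptional sequence and this $2^j$ weighting, since the paper invokes it but has not yet recalled it in detail.)

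For the main identity, I would argue as follows. Let $\Omega$ be the sample space of exceptional sequences of length $k$, with the uniform distribution, so $|\Omega| = e_k$. For a sequence $E_\ast \in \Omega$, let $N(E_\ast)$ be the number of indices $j$ with $1 \le j \le k$ for which $C_j$ holds, i.e. $E_j$ is relatively projective. Then by definition $b_j = e_k\,\mathbb P(N = j)$, so
\[
    f_k(z) = \sum_{j=0}^k b_j z^j = e_k \sum_{j=0}^k \mathbb P(N=j)\, z^j = e_k\, \mathbb E\!\left[z^{N}\right].
\]
Now write $N = \sum_{j=1}^k \mathbf 1_{C_j}$, so $z^N = \prod_{j=1}^k z^{\mathbf 1_{C_j}}$. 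Since the events $C_j$ are independent, the random variables $z^{\mathbf 1_{C_j}}$ are independent, and the expectation of the product is the product of the expectations:
\[
    \mathbb E\!\left[z^{N}\right] = \prod_{j=1}^k \mathbb E\!\left[z^{\mathbf 1_{C_j}}\right] = \prod_{j=1}^k \bigl(z\,\mathbb P(C_j) + 1\cdot \mathbb P(C_j^c)\bigr) = \prod_{j=1}^k (zP_j + 1 - P_j).
\]
Combining the two displays gives $f_k(z) = e_k\prod_{j=1}^k(zP_j+1-P_j)$, which is the first claim.

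For the second claim I simply substitute $z=2$: since $2P_j + 1 - P_j = 1 + P_j$, we get $f_k(2) = e_k\prod_{j=1}^k(1+P_j)$, and by the signed-exceptional-sequence interpretation above this is the number of signed exceptional sequences of length $k$. (The index $\ell$ appearing in the statement $f_k(2)=e_k\prod_{j=1}^\ell(1+P_j)$ should read $k$; I would write it as $k$ throughout.) I do not expect any serious obstacle here — the only mild subtlety is making the passage from "independent events" to "independent indicator random variables, hence multiplicativity of expectation" fully rigorous, but that is a standard fact about finite probability spaces and needs only a sentence. The one genuinely external input is the combinatorial fact that signing an exceptional sequence amounts to choosing signs precisely at the relatively projective terms, contributing the factor $2^j$; since the paper treats this as known, I would state it with a citation rather than prove it.
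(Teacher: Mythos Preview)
Your argument is correct. The paper's proof reaches the same conclusion by a slightly different route: rather than computing $\mathbb E[z^N]$ directly, it introduces the multivariable generating function $f(z_k,\ldots,z_1)=\sum_\alpha b_\alpha z^\alpha$ (with $\alpha\in\{0,1\}^k$ encoding the exact pattern of relatively projective positions), builds up the factorization $e_k\prod_j(P_jz_j+1-P_j)$ one variable at a time using independence, and then specializes all $z_j$ to a single $z$. Your probability-generating-function computation is more streamlined for the one-variable statement; the paper's multivariable form has the advantage that the factored expression \eqref{eq: multivariable f(z) for independent probabilities} is reused later in Section~6 as a criterion for independence (it fails to factor for $D_4$). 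Both proofs handle the signed count identically, via the $2^j$ weighting. Your remark that the index $\ell$ in the displayed $f_k(2)$ formula should be $k$ is also correct.
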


\begin{proof} It is easier to explain the multivariable generating function:
\[
    f(z_k,\cdots,z_1)=\sum_\alpha b_\alpha z^\alpha
\]
where the sum is over all multi-indices $\alpha=(a_k,\cdots,a_1)\in \{0,1\}^n$, $z^\alpha=\prod z_i^{a_i}$ and $b_\alpha$ is the number of exceptional sequences of length $k$ in which $E_j$ is relatively projective for $a_j=1$ and not relatively projective for $a_j=0$. Since the proportion of the $e_k$ exceptional sequences of length $k$ in which $E_1$ is relatively projective is $P_1$ and the proportion in which $E_1$ is not relatively projective is $1-P_1$, we get $f(1,1,\cdots,1,z_1)=e_k(P_1z_1+1-P_1)$. Since $E_2$ being relatively projective is independent of this, $f(1,\cdots,1,z_2,z_1)=e_k(P_1z_1+1-P_1)(P_2z_2+1-P_2)$. Continuing in this way, we obtain:
\begin{equation}\label{eq: multivariable f(z) for independent probabilities}
        f(z_k,\cdots,z_1)=e_k\prod_{j=1}^k (P_jz_j+1-P_j).
\end{equation}
Putting all $z_j=z$ we obtain the formula for $f_k(z)$. Every exceptional sequence with $j$ relatively projective terms gives $2^j$ signed exceptional sequences since each relatively projective term can have two signs independently. So, the number of signed exceptional sequences is $f_k(2)$.
\end{proof}

\begin{corollary}\label{cor: generating function for exc seq}
The generating function for $b_{k,p}$, the number of exceptional sequences of length $k$ with $p$ relatively projective terms is
\[
	f_{A_n,k}(z)=\sum b_{k,p}z^p= \sum_{\lambda\vdash\,n-k}N_h(\lambda)\prod_{j=1}^k\left(\frac{(j+1)z+n-j}h
	\right)
\]
\[
	=\frac1{n+1}\binom{n+1}{k+1}\prod_{j=1}^k ((j+1)z+n-j).
\]
\end{corollary}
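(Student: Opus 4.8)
The plan is to derive Corollary~\ref{cor: generating function for exc seq} as an essentially immediate consequence of the machinery already assembled: Theorem~\ref{thm: Nh(lambda)} (sizes of $\cN_h(\lambda)$), the independence statement of Theorem~\ref{thm: thm A} together with the relative probability $\mathbb P(B_{k,j})=\tfrac{j+1}{h}$, Lemma~\ref{lem: probability gives number of clusters}, and Remark~\ref{rem: sum of Sh(lambda)}. There is no genuinely hard step here; the work is in assembling the pieces and a short binomial-type simplification.

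First I would fix $k$ and, for each nonnegative partition $\lambda=(\lambda_0\le\cdots\le\lambda_k)$ of $n-k$, restrict attention to the set $\cN_h(\lambda)$ of exceptional sequences whose right perpendicular category has type $A_\lambda$. By the last lemma of Section~\ref{ss23} applied inside this subfamily — or more precisely by the refined independence established in the lemmas of the following section — the events $B_{k,1},\dots,B_{k,k}$ restricted to $\cN_h(\lambda)$ are independent with $\mathbb P(B_{k,j}\mid T_k(\cdot))=\tfrac{j+1}{h}$; this is exactly the hypothesis of Lemma~\ref{lem: probability gives number of clusters} with $e_k$ replaced by $|\cN_h(\lambda)|=N_h(\lambda)$ and $P_j=\tfrac{j+1}{h}$. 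Hence the generating function for the number of sequences in $\cN_h(\lambda)$ with $p$ relatively projective terms is $N_h(\lambda)\prod_{j=1}^k\bigl(\tfrac{j+1}{h}z+\tfrac{h-j-1}{h}\bigr)=N_h(\lambda)\prod_{j=1}^k\bigl(\tfrac{(j+1)z+n-j}{h}\bigr)$, using $h=n+1$ so that $h-j-1=n-j$. Summing over all $\lambda\vdash n-k$ gives the first displayed equality, since the sets $\cN_h(\lambda)$ partition the sample space of all length-$k$ exceptional sequences.

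For the second equality I would pull the $\lambda$-independent product out of the sum, obtaining $\bigl(\prod_{j=1}^k\tfrac{(j+1)z+n-j}{h}\bigr)\sum_{\lambda\vdash n-k}N_h(\lambda)$, and then evaluate $\sum_{\lambda\vdash n-k}N_h(\lambda)$. By Theorem~\ref{thm: Nh(lambda)}, $N_h(\lambda)=h^{k-1}|\cS_h(\lambda)|$, so $\sum_\lambda N_h(\lambda)=h^{k-1}\sum_\lambda|\cS_h(\lambda)|=h^{k-1}\binom{h}{k+1}$ by Remark~\ref{rem: sum of Sh(lambda)} (equivalently Corollary~\ref{cor: number of exc seq of type A}). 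Since there are $k$ factors of $1/h$, the prefactor $h^{k-1}/h^{k}=1/h=1/(n+1)$ survives, and with $\binom{h}{k+1}=\binom{n+1}{k+1}$ one gets $\tfrac{1}{n+1}\binom{n+1}{k+1}\prod_{j=1}^k\bigl((j+1)z+n-j\bigr)$, which is the claimed closed form.

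The only point requiring a little care — and the place I expect any friction — is justifying that the conditional independence of the $B_{k,j}$ \emph{within each fixed type class} $\cN_h(\lambda)$ is precisely what Lemma~\ref{lem: probability gives number of clusters} consumes; the lemmas in Section~\ref{ss23} and the following section give the independence of the $B_k$'s jointly with the type events $T_k(\lambda)$, which is exactly this statement, but one should state the reduction cleanly rather than invoke Theorem~\ref{thm: thm A} (whose statement is about the unconditioned events). Everything else is bookkeeping: matching $P_j=\tfrac{j+1}{h}$ to the factor $\tfrac{(j+1)z+n-j}{h}$ via $1-(1-z)P_j$, and the one-line evaluation of $\sum_\lambda N_h(\lambda)$ via Remark~\ref{rem: sum of Sh(lambda)} and Theorem~\ref{thm: Nh(lambda)}.
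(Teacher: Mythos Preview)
Your proposal is correct and follows the paper's intended derivation: apply Lemma~\ref{lem: probability gives number of clusters} with $P_j=\tfrac{j+1}{h}$ and then evaluate $e_k=\sum_\lambda N_h(\lambda)=h^{k-1}\binom{h}{k+1}$. The only remark is that the per-$\lambda$ conditioning you set up is more than you need: since the product $\prod_{j=1}^k\tfrac{(j+1)z+n-j}{h}$ is independent of $\lambda$, the first displayed equality is nothing more than the identity $e_k=\sum_\lambda N_h(\lambda)$ together with Lemma~\ref{lem: probability gives number of clusters} applied directly to the whole sample space using the \emph{unconditional} independence of Theorem~\ref{thm: thm A}, so the friction you anticipate about conditional versus unconditional independence never actually arises.
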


Recall that a \emph{signed exceptional sequence} \cite{IT13} is an exceptional sequence in which relatively projective terms are allowed to be labeled as positive or negative. Thus, the number of signed exceptional sequences of length $k$ is given by inserting $z=2$ in the above expression.

\begin{corollary}
The number of signed exceptional sequences of length $k$ for $A_n$ with any orientation is equal to
\[
	f_{A_n,k}(2)=\frac1{n+1}\binom{n+1}{k+1}\frac{(n+k+2)!}{(n+2)!}
\]
\end{corollary}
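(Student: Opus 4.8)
The claim is that $f_{A_n,k}(2)=\frac{1}{n+1}\binom{n+1}{k+1}\frac{(n+k+2)!}{(n+2)!}$, which is obtained from the previous corollary simply by evaluating the polynomial $f_{A_n,k}(z)$ at $z=2$. So the whole task reduces to simplifying
\[
	f_{A_n,k}(2)=\frac1{n+1}\binom{n+1}{k+1}\prod_{j=1}^k\bigl((j+1)\cdot 2+n-j\bigr)
	=\frac1{n+1}\binom{n+1}{k+1}\prod_{j=1}^k (n+j+2).
\]
The first step is therefore to observe that $(j+1)z+n-j$ at $z=2$ equals $2j+2+n-j=n+j+2$, so the product over $j=1,\dots,k$ telescopes into $\prod_{j=1}^k(n+j+2)=(n+3)(n+4)\cdots(n+k+2)$.

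\textbf{Key step.} This product of consecutive integers is a ratio of factorials: $\prod_{j=1}^k(n+j+2)=\dfrac{(n+k+2)!}{(n+2)!}$, since the numerator is $(n+k+2)(n+k+1)\cdots(n+3)(n+2)!$ and cancelling $(n+2)!$ leaves exactly the $k$ factors $n+3$ through $n+k+2$. Substituting this back gives $f_{A_n,k}(2)=\frac{1}{n+1}\binom{n+1}{k+1}\frac{(n+k+2)!}{(n+2)!}$, which is the desired formula. One should double-check the degenerate case $k=0$ (empty product $=1$, and indeed $(n+2)!/(n+2)!=1$) and note that the formula agrees with Lemma \ref{lem: probability gives number of clusters} applied with $P_j=\frac{j+1}{n+1}$ from Theorem \ref{thm: thm A}, since $e_k\prod_{j=1}^k(1+P_j)=\binom{h}{k+1}h^{k-1}\prod_{j=1}^k\frac{n+j+2}{h}=\frac{1}{h}\binom{h}{k+1}\prod_{j=1}^k(n+j+2)$, matching the line above with $h=n+1$.

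\textbf{Main obstacle.} Frankly there is no real obstacle here: the statement is a one-line algebraic consequence of Corollary \ref{cor: generating function for exc seq}, and the only thing to be careful about is the bookkeeping of the index shift (confirming that the $k$ factors really are $n+3,\dots,n+k+2$ rather than an off-by-one variant) and making sure the binomial coefficient $\binom{n+1}{k+1}$ and the prefactor $\frac{1}{n+1}$ are carried through unchanged. So I would simply present the evaluation $z=2$, the telescoping of the product, and the factorial identity in three short lines, perhaps with the cross-check against $f_k(2)=e_k\prod(1+P_j)$ as a sanity remark.
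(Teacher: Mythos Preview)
Your proposal is correct and matches the paper's approach: the corollary is stated without a separate proof in the paper, as it follows immediately from Corollary~\ref{cor: generating function for exc seq} by substituting $z=2$ and simplifying $(j+1)\cdot 2+n-j=n+j+2$, then recognizing $\prod_{j=1}^k(n+j+2)=(n+k+2)!/(n+2)!$. Your extra sanity check via Lemma~\ref{lem: probability gives number of clusters} is a nice touch but not required.
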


This can also be deduced from known results since signed exceptional sequences are in bijection with ordered partial cluster tilting sets by \cite[Theorem 2.3]{IT13} and the number of such sets for $A_n$ is
\[
	\frac{1}{n+1}\binom{n+1}{k+1}\binom{n+k+2}{n+2}.
\]

Dualizing and reversing the order of an exceptional sequence $(E_k,\cdots,E_1)$ gives an exceptional sequence $(DE_1,\cdots,DE_k)$ for the opposite algebra. An object $E_j$ in the first sequence will be relatively projective if and only if $DE_j$ is relatively injective in the second sequence. This gives the following.

\begin{corollary}\label{cor: injective probability}
Given a random exceptional sequence $(E_1,\cdots,E_k)$ of length $k$ for $A_n$ with any orientation, the probability that $E_j$ is relatively injective is equal to $\frac{j+1}{n+1}$. Furthermore, these events are independent.
\end{corollary}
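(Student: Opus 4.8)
The plan is to obtain this as a formal consequence of Theorem~\ref{thm: thm A} via the duality functor $D=\Hom_\kk(-,\kk)\colon \mathrm{mod}\,\Lambda\to\mathrm{mod}\,\Lambda\op$, which is an exact contravariant equivalence. First I would record the basic fact that $D$ carries an exceptional sequence $(E_k,\cdots,E_1)$ for $\Lambda$ to a sequence $(DE_1,\cdots,DE_k)$ for $\Lambda\op$, and that this is again exceptional: the Hom and Ext vanishing conditions $\Hom(E_i,E_j)=0=\Ext^1(E_i,E_j)$ for $i<j$ translate, under $D$, into $\Hom_{\Lambda\op}(DE_j,DE_i)=0=\Ext^1_{\Lambda\op}(DE_j,DE_i)$, which is exactly the exceptional-sequence condition for the reindexed tuple $(DE_1,\cdots,DE_k)$ (reading left to right in the new order). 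Moreover each $DE_i$ is indecomposable and rigid, so it is exceptional. Crucially, since $A_n$ with any orientation is its own opposite up to isomorphism of quivers (reverse every arrow), $\Lambda\op$ is again a hereditary algebra of type $A_n$, so Theorem~\ref{thm: thm A} applies verbatim to $\Lambda\op$.

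Next I would match up the relative-projectivity/relative-injectivity notions under $D$. Write $\cE'_j=(E_{j-1}\oplus\cdots\oplus E_1)^\perp$ in $\mathrm{mod}\,\Lambda$ and let ${}^\perp(DE_k\oplus\cdots\oplus DE_{j+1})$ be the corresponding left perpendicular category in $\mathrm{mod}\,\Lambda\op$. The duality $D$ restricts to an anti-equivalence between these two exact (indeed abelian hereditary) subcategories, taking right perpendicular categories to left perpendicular categories and interchanging projectives with injectives. Hence $E_j$ is a projective object of $\cE'_j$ if and only if $DE_j$ is an injective object of ${}^\perp(DE_k\oplus\cdots\oplus DE_{j+1})$, i.e.\ $E_j$ is relatively projective in $(E_k,\cdots,E_1)$ if and only if $DE_j$ is relatively injective in $(DE_1,\cdots,DE_k)$. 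Since $D$ is a bijection on isomorphism classes, it gives a bijection between exceptional sequences of length $k$ for $\Lambda$ and those for $\Lambda\op$ under which the event ``$E_j$ relatively projective'' corresponds to the event ``the $j$-th term (counted from the right in the original numbering) relatively injective.''

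Finally I would conclude. Under the bijection $D$, a uniformly random exceptional sequence of length $k$ for $\Lambda$ maps to a uniformly random exceptional sequence of length $k$ for $\Lambda\op$, and the event that the $j$-th term is relatively injective pulls back to $B_{k,j}$ for $\Lambda$. By Theorem~\ref{thm: thm A} applied to $\Lambda$, $\mathbb P(B_{k,j})=\frac{j+1}{n+1}$ and the events $B_{k,j}$ are independent; transporting along $D$ gives the same probability $\frac{j+1}{n+1}$ and the same independence for the relative-injectivity events in $\mathrm{mod}\,\Lambda\op$. Since $\Lambda\op$ ranges over all hereditary algebras of type $A_n$ as $\Lambda$ does, this is the claimed statement. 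The only genuinely non-routine point is the careful bookkeeping of indices — the paper numbers exceptional sequences right-to-left, and $D$ reverses the order, so one must check that ``$E_j$'' (the $j$-th from the right) continues to label the object whose relative injectivity has probability $\frac{j+1}{n+1}$; once the reindexing convention is pinned down this is immediate, and everything else is the formal functoriality of $D$.
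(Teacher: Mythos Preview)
Your approach is exactly the paper's: dualize and reverse the order, so that $(E_k,\cdots,E_1)$ for $\Lambda$ becomes $(DE_1,\cdots,DE_k)$ for $\Lambda\op$, and relative projectivity of $E_j$ becomes relative injectivity of $DE_j$; then invoke Theorem~\ref{thm: thm A}. The paper states this in one sentence, and you have simply unpacked it.

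One bookkeeping slip to fix: the image of $\cE'_j=(E_{j-1}\oplus\cdots\oplus E_1)^\perp$ under $D$ is ${}^\perp(DE_1\oplus\cdots\oplus DE_{j-1})$, not ${}^\perp(DE_k\oplus\cdots\oplus DE_{j+1})$. Indeed $\Hom_\Lambda(E_i,X)=0=\Ext^1_\Lambda(E_i,X)$ for $i<j$ dualizes to $\Hom_{\Lambda\op}(DX,DE_i)=0=\Ext^1_{\Lambda\op}(DX,DE_i)$ for $i<j$. This is also the correct category for relative injectivity: in the sequence $(DE_1,\cdots,DE_k)$ the terms to the \emph{left} of $DE_j$ are $DE_1,\cdots,DE_{j-1}$. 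With that correction your argument goes through verbatim, and the index $j$ matches up as required.
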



\section{Rooted labeled forests}

A \emph{rooted labeled forest} of size $n$ is a rooted forest (disjoint union of rooted trees) with $n$ nodes numbered $1$ through $n$. For example, Figure \ref{Fig: example of rooted labeled forest} shows a rooted labeled forest of size $4$.

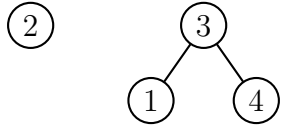
\begin{figure}[htbp]
\begin{center}
\begin{tikzpicture}
\coordinate (A) at (-1,1);
\coordinate (B) at (.6,0);
\coordinate (C) at (1.3,1); 
\coordinate (D) at (2,0); 
\draw[thick] (B)--(C)--(D);
\foreach \x in {B,C,D}\draw[white,fill] (\x) circle[radius=3mm];
\draw[thick] (A) node{$2$} circle[radius=3mm];
\draw[thick] (B) node{$1$} circle[radius=3mm];
\draw[thick] (C) node{$3$} circle[radius=3mm];
\draw[thick] (D) node{$4$} circle[radius=3mm];
\end{tikzpicture}
\caption{This is a rooted labeled forest of size $4$ with roots at the top. Vertices $v_2,v_3$ are the roots. $v_1$ is a descending vertex since $1<3$. $v_4$ is an ascending vertex since $4>3$.}
\label{Fig: example of rooted labeled forest}
\end{center}
\end{figure}

In \cite{IS} we showed the following theorem which was suggested to us by Olivier Bernardi. From now on, we number our exceptional sequences from left to right to agree with this and other references. By ``$A_n$ with linear orientation'' we mean the following quiver.
\[
	A_n:\quad 1\leftarrow 2\leftarrow 3\leftarrow \cdots\leftarrow n.
\]
The two linear orientations of $A_n$ are isomorphic and the other one is used in \cite{IS}.

\begin{theorem}\cite{IS}\label{IS theorem}
There is a bijection between rooted labeled forests and complete exceptional sequences for the quiver of type $A_n$ with linear orientation. Furthermore, if $F$ is the forest corresponding to $E_\ast=(E_1,\cdots,E_n)$ then:
\begin{enumerate}
\item[a)] $E_k$ is relatively projective in $E_\ast$ if and only if the corresponding vertex $v_k$ in $F$ is either a root of $F$ or a descending vertex (a child of $v_\ell$ where $\ell>k$)
\item[b)] $E_k$ is relatively injective if and only if $v_k$ is a root or an ascending vertex of $F$.
\end{enumerate}
\end{theorem}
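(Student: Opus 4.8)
The plan is to establish the bijection between rooted labeled forests of size $n$ and complete exceptional sequences for the linearly oriented $A_n$ quiver, together with the translation of the relative projectivity/injectivity conditions into the combinatorial "root or descending/ascending vertex" conditions. Since this theorem is attributed to \cite{IS}, my proof sketch here essentially reconstructs that argument, and the natural route is induction on $n$ driven by the deletion of the term $E_1$ (or the vertex $v_1$) and the structure of the right perpendicular category.

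\textbf{Construction of the bijection.} First I would set up the recursive structure. Given a complete exceptional sequence $(E_1,\dots,E_n)$ for linearly oriented $A_n$, consider $E_1$: its right perpendicular category $E_1^\perp$ is again of type $A$, in fact a product $A_{\lambda_0}\times A_{\lambda_1}$ with $\lambda_0+\lambda_1+1 = n$, corresponding to deleting the support vertex of $E_1$ from the quiver (as in the $k=1$ discussion in Section~\ref{ss22}, where $E_1 = P_i$ forces $\lambda=(i-1,n-i)$, though one must be careful that in the linear orientation the exceptional module $E_1$ need not be projective). The sequence $(E_2,\dots,E_n)$ is then a complete exceptional sequence in this product category, hence decomposes into exceptional sequences on each factor. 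On the forest side, deleting the vertex $v_1$ (which, being minimally labeled, is necessarily a leaf or a root with leaf-like children — more precisely, $v_1$ has no children, since any child would have a smaller label) splits $F$ into the forest below $v_1$'s siblings and so on. I would match: $v_1$ is a root of $F$ $\iff$ $E_1$ is relatively projective in $E_\ast$ $\iff$ $E_1$ is a projective object of the whole module category; and when $v_1$ is a non-root child of $v_\ell$, the value $\ell$ records which "gluing point" reattaches the two factors $A_{\lambda_0}, A_{\lambda_1}$ into $A_{n}$. Then I would invoke the induction hypothesis on the smaller forests/sequences and check that the reassembly is consistent.

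\textbf{Matching the relative (co)projective conditions.} The heart of part (a) and (b) is: an object $E_k$ in the exceptional sequence is relatively projective in the perpendicular category $\cE_{<k} := (E_{k-1}\oplus\cdots\oplus E_1)^\perp$ (equivalently the description via the full sequence) precisely when, after the recursive deletions, $v_k$ either never acquires a parent (root) or becomes a child of a higher-labeled vertex (descending). The key local fact to verify is that in type $A$ with linear orientation, when one passes to a perpendicular category $A_a \times A_b$ obtained by deleting an exceptional object, the relatively projective objects of the ambient $A_c$ ($c = a+b+1$) restrict correctly — this is exactly the numerology already exploited in Lemma~\ref{lem: recursive formula for no. of k-exc seq of type An}, where each $A_c$-component contributes $2X(a,b)$ projective objects. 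I would argue that "descending" is the shadow of "projective in the smaller factor" under the gluing, and dually for "ascending" and relative injectivity via the opposite-algebra duality already noted in Corollary~\ref{cor: injective probability}. A clean bookkeeping device is to track, for each $k$, the moment in the recursion at which $v_k$ plays the role of $v_1$ in its subforest, and to use the $k=1$ characterization (projective $\iff$ root) at that moment.

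\textbf{Main obstacle.} The genuinely delicate point is not the counting — that both sides have cardinality $(n+1)^{n-1}$ is classical and also follows from Corollary~\ref{cor: number of exc seq of type A} with $k=n$ — but verifying that the recursively-defined map is well-defined and bijective, i.e.\ that the choice of reattachment data (the parent label $\ell$ for $v_1$, and on which of the two factors it lands) is in natural bijection with the ways of building an exceptional $E_1$ with a prescribed $E_1^\perp = A_{\lambda_0}\times A_{\lambda_1}$. One must confirm that for the linear orientation the number of exceptional modules $E$ with $E^\perp$ of a given type $A_a\times A_b$ matches the number of vertices of a forest-with-$n$-vertices that can serve as $v_1$ with the corresponding split, and that the sign/both-projective-and-injective degeneracies (the $X(a,b)=\tfrac12$ case when $a=b$, and the "root" case) line up on the two sides. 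Handling the orientation carefully — exceptional modules for linear $A_n$ are the intervals $[i,j]$, and one identifies which are projective, which injective, and how $\tau$ acts — is the bulk of the work, and it is where \cite{IS} does the real combinatorial labor; I would present it as an induction with the deletion-of-$v_1$ step as the inductive engine, citing \cite{IS} for the full verification.
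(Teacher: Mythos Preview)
The paper does not prove this theorem at all: Theorem~\ref{IS theorem} is stated with the citation \cite{IS} and is used as a black box to deduce Corollary~\ref{cor: probabilities for forests}. There is no argument in the present paper to compare your proposal against; the ``paper's own proof'' is simply a reference to the companion paper of Igusa and Sen.

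Your sketch is a plausible outline of how such a bijection might be built recursively, and you correctly flag that the real work lies in the reattachment bookkeeping and the orientation-specific identification of projective/injective intervals. But since none of that is carried out here, your proposal is neither confirmed nor contradicted by this paper. If you want to assess whether your induction-by-deleting-$E_1$ strategy matches the original, you would need to consult \cite{IS} directly; the present paper gives no hint of the mechanism of the bijection beyond the statement itself.
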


Combining this with our main result and the fact that every vertex of a rooted labeled forest is either descending, ascending, or a root, we get the following.

\begin{corollary}\label{cor: probabilities for forests}
For a random rooted labeled forest of size $n$ and $1\le k\le n$, let $D_k$ be the event that $v_k$ is descending. Then these events are independent and have probability
\[
	\mathbb P(D_k)=\frac{n-k}{n+1}.
\] 
\end{corollary}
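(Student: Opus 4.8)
The plan is to pull the statement back, through the bijection of Theorem \ref{IS theorem}, to the already-established statement about relatively injective terms of a random complete exceptional sequence. So the proof will be essentially a translation, with no new computation.

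First I would record the trichotomy that every node of a rooted labeled forest is exactly one of: a root, an ascending vertex, or a descending vertex. Hence the event $D_k$ that $v_k$ is descending is precisely the complement of the event ``$v_k$ is a root or an ascending vertex.'' By part (b) of Theorem \ref{IS theorem}, under the bijection $F\leftrightarrow(E_1,\cdots,E_n)$ this latter event corresponds exactly to the event that $E_k$ is relatively injective in the complete exceptional sequence for $A_n$ with linear orientation. Since Theorem \ref{IS theorem} is a bijection between two finite sets, it carries the uniform distribution on rooted labeled forests of size $n$ to the uniform distribution on complete exceptional sequences; therefore $D_k$ has the same probability as the complement of the event $\{E_k\text{ relatively injective}\}$, and the family $\{D_k\}_{k=1}^{n}$ is mutually independent if and only if the corresponding family of complements is.

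Then I would invoke Corollary \ref{cor: injective probability} for sequences of length $n$ (i.e. complete exceptional sequences), applied to the linear orientation: the events ``$E_k$ relatively injective'', $1\le k\le n$, are mutually independent with $\mathbb P(E_k\text{ relatively injective})=\frac{k+1}{n+1}$. Passing to complements (mutual independence is preserved under complementing some of the events) gives at once that the events $D_k$ are independent and
\[
	\mathbb P(D_k)=1-\frac{k+1}{n+1}=\frac{n-k}{n+1},
\]
as claimed.

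There is no genuinely hard step here; the only thing requiring care is the bookkeeping of conventions, namely that both Theorem \ref{IS theorem} and Corollary \ref{cor: injective probability} use the left-to-right indexing $(E_1,\cdots,E_n)$ (in contrast to the right-to-left indexing of Theorem \ref{thm: thm A}), and that ``relatively injective'' in Theorem \ref{IS theorem}(b) means injectivity in the left perpendicular category of the terms \emph{preceding} $E_k$, which is exactly the notion used in Corollary \ref{cor: injective probability}. As a consistency check, $v_n$ (the largest label) is never descending, matching $\mathbb P(D_n)=\frac{n-n}{n+1}=0$, while $v_1$ is descending unless it is a root, and $\mathbb P(v_1\text{ is a root})=\mathbb P(E_1\text{ relatively injective})=\frac{2}{n+1}$, matching $\mathbb P(D_1)=\frac{n-1}{n+1}$.
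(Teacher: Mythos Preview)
Your proof is correct and is exactly the argument the paper has in mind: it says just before the corollary that the statement follows by ``combining this [Theorem~\ref{IS theorem}] with our main result and the fact that every vertex of a rooted labeled forest is either descending, ascending, or a root.'' You have simply written out that sentence carefully, using part~(b) of Theorem~\ref{IS theorem} together with Corollary~\ref{cor: injective probability} and the trichotomy, and correctly tracked the left-to-right indexing convention adopted in that section.
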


This gives a probabilistic interpretation of the one variable generating function for the number of descending vertices and, by duality, ascending vertices. Thus if $c_p$ is the number of rooted labeled forests with $p$ ascending vertices then
\[
	f(a)=\sum c_p a^p=(n+1)^{n-1}\prod_{k=1}^n \left(
	\frac{k+1+(n-k)a}{n+1}
	\right)
\]
\[
	=\prod_{k=1}^{n-1}(k+1+(n-k)a)
\]


\section{Other Dynkin quivers}

Our main theorem, which holds for quivers of type $A_n$ with any orientation does not seem to hold for other simply-laced Dynkin quivers of rank 3 or more. We will compute the probability distribution for $D_4$ using a simple recursive formula. It is based on the following theorem which follows easily from \cite{Ringel}. For this formula, we need to write a complete exceptional sequence in the standard way: $(E_1,\cdots,E_n)$. For any not necessarily connected Dynkin quiver $\Delta$ we define the $n$ variable generating function $f_\Delta(z_\ast)$, as in the proof of Lemma \ref{lem: probability gives number of clusters}, by
\[
	f_\Delta(z_1,\cdots,z_n)=\sum_\alpha c_\alpha z^\alpha
\]
where the sum is over all multi-indices $\alpha=(a_1,\cdots,a_n)$ where each $a_i\in \{0,1\}$, $z^\alpha=z^{a_1}z^{a_2}\cdots z^{a_n}$ and $c_\alpha$ is the number of complete exceptional sequences $(E_1,\cdots,E_n)$ in which $E_i$ is relatively projective for $a_i=1$ and not relatively projective for $a_i=0$.

We have the following theorem which is essentially due to \cite{Ringel}.

\begin{theorem}\label{thm: recursive formula for fAn}
\[
f_\Delta(z_1,\cdots,z_n)=\frac12(2z_n+ h-2) \sum_{i=1}^n f_{\Delta_i}(z_1,\cdots,z_{n-1})
\]
where $h$ is the Coxeter number of $\Delta$ and $\Delta_i$ denotes $\Delta$ with vertex $i$ deleted.
\end{theorem}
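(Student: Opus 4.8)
The plan is to prove the formula by a one-step ``peeling'' argument on the last term $E_n$ of a complete exceptional sequence $(E_1,\cdots,E_n)$ for $\Delta$, mirroring the $k=1$ analysis already carried out in Section \ref{ss22} but now at the other end of the sequence. First I would recall from \cite{Ringel} (and the discussion surrounding the $k=1$ case of Theorem \ref{thm: Nh(lambda)}) that the rightmost term $E_n$ ranges over the indecomposable objects of $\Delta$, that for each such $E_n$ the right perpendicular category $E_n^\perp$ is of type $\Delta_i$ for a well-defined $i$ (the vertex whose removal produces that perpendicular type), and crucially that the number of indecomposables $E_n$ with $E_n^\perp$ of a given type $\Delta_i$ is exactly $h$ divided by the size of the relevant $\tau$-orbit business --- concretely, the count of indecomposables $E$ with $E^\perp \cong \Delta_i$ equals $h$ (the Coxeter number), of which exactly $2$ are relatively projective (the two projectives $P_i$, $P_{n-i+1}$ in the linear $A_n$ picture, with the appropriate coincidence when they agree). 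I would phrase this in the type-independent language Ringel uses: for each $i$ there are $h$ choices of $E_n$ with $E_n^\perp \cong \Delta_i$, of which a number I will call $p_i$ are relatively projective, and I will then need to pin down that $\sum_i$ of these contributions, weighted by $z_n$ or $1$ according to relative projectivity, collapses to the stated factor $\tfrac12(2z_n + h - 2)$.

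The key computation is the following. Conditioning on $E_n$, every complete exceptional sequence of $\Delta$ decomposes as a choice of $E_n$ together with a complete exceptional sequence $(E_1,\cdots,E_{n-1})$ of $E_n^\perp \cong \Delta_i$, and the relative projectivity of $E_1,\cdots,E_{n-1}$ inside $E_n^\perp$ is exactly their relative projectivity as computed in $\Delta_i$, which is recorded by $f_{\Delta_i}(z_1,\cdots,z_{n-1})$. Therefore
\[
f_\Delta(z_1,\cdots,z_n)=\sum_{E_n} w(E_n)\, f_{E_n^\perp}(z_1,\cdots,z_{n-1}),
\]
where $w(E_n)=z_n$ if $E_n$ is relatively projective and $w(E_n)=1$ otherwise. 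Grouping the $E_n$ by the type $\Delta_i$ of their perpendicular category, this becomes $\sum_{i=1}^n \big(p_i z_n + (h-p_i)\big) f_{\Delta_i}(z_1,\cdots,z_{n-1})$. The point then is that $p_i$ is the same for every $i$ --- it equals $2$ in the generic case (and the $X(a,b)=\tfrac12$ phenomenon already seen in Section \ref{ss22} handles the degenerate coincidence uniformly, so that $p_i z_n + (h-p_i)$ still equals $z_n + (h-1)$ after halving both the orbit count and the projective count, i.e.\ $\tfrac12(2z_n + h - 2)$ is the correct common weight per orbit-pair). Pulling this common factor $\tfrac12(2z_n+h-2)$ out of the sum yields the claimed identity.

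The main obstacle, and the step I would spend the most care on, is the bookkeeping of the constant $p_i$ and the orbit sizes: one has to be sure that across \emph{all} Dynkin types and all vertices $i$, the number of indecomposables $E$ with $E^\perp$ of a fixed type is $h$ and the number of those that are relatively projective is $2$, with the single coincidence case ($P_i \cong P_{n-i+1}$, or its analogue) being absorbed by simultaneously halving numerator and denominator exactly as the factor $X(a,b)$ does in Lemma \ref{lem: recursive formula for no. of k-exc seq of type An}. This is the content attributed to \cite{Ringel}; I would cite it for the orbit-counting and then only need to check that the halving is consistent, i.e.\ that $p_i/(\text{orbit-pair size})$ is constant. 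Everything else --- the decomposition of exceptional sequences via $E_n^\perp$, the multiplicativity of the generating function under this decomposition, and the re-indexing of the sum over $E_n$ by the deleted vertex $i$ --- is formal and follows the template of the proof of Theorem \ref{thm: Nh(lambda)}. I would close by noting that summing over $i$ rather than over $\tau$-orbits is legitimate precisely because distinct $i$ can give isomorphic $\Delta_i$ yet still must be counted separately, which is consistent since $f_{\Delta_i}$ depends only on the isomorphism type.
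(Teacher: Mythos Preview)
The paper does not actually prove this theorem; it states it as ``essentially due to \cite{Ringel}'' and immediately passes to examples. So there is no in-paper proof to compare against, and your one-step peeling on $E_n$ is exactly the Ringel argument the paper has in mind.

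That said, your bookkeeping of the factor $\tfrac12$ is garbled. You assert that ``for each $i$ there are $h$ choices of $E_n$ with $E_n^\perp\cong\Delta_i$, of which exactly $2$ are relatively projective,'' and then write the grouped sum as $\sum_{i=1}^n\bigl(p_i z_n+(h-p_i)\bigr)f_{\Delta_i}$. With $p_i=2$ this gives $(2z_n+h-2)\sum_i f_{\Delta_i}$, which is \emph{twice} the claimed formula, and your attempt to reconcile this (``$p_i z_n+(h-p_i)$ still equals $z_n+(h-1)$ after halving'') is simply wrong: $z_n+(h-1)$ appears nowhere, and you never say what is being halved or why. The correct picture is that the Nakayama involution $\sigma$ pairs the vertices so that $\Delta_i\cong\Delta_{\sigma(i)}$; the $h$ indecomposables with perpendicular of that isomorphism type (two of them projective, namely $P_i$ and $P_{\sigma(i)}$) belong to the \emph{pair} $\{i,\sigma(i)\}$, not to each vertex separately. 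Summing over $i=1,\dots,n$ therefore counts every such block twice, whence the $\tfrac12$. In the fixed-point case $\sigma(i)=i$ the orbit has only $h/2$ elements with a single projective $P_i$, and the weight $z_n+(h/2-1)=\tfrac12(2z_n+h-2)$ is already correct without doubling, so the formula is uniform. Equivalently and more cleanly: the projective contribution is $\sum_i z_n f_{P_i^\perp}=z_n\sum_i f_{\Delta_i}$ on the nose, and Ringel's orbit count shows the $n(h-2)/2$ non-projectives contribute $\tfrac{h-2}{2}\sum_i f_{\Delta_i}$.

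So your plan is the right one; the gap is that you have misstated the orbit count by a factor of $2$ and then tried to patch it with an unexplained halving. Once you phrase the count per pair $\{i,\sigma(i)\}$ (or as $h/2$ indecomposables per vertex, one projective), the argument goes through exactly as you outline.
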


We note that, as in Equation \eqref{eq: multivariable f(z) for independent probabilities}, the statement that the events $B_k$ ($E_k$ being relatively projective) are independent if and only if the $n$ variable generating function is a product of $n$ linear terms in the variables $z_i$ separately. For example, take $A_3$. Then $h=4$. So,
\[
	f_{A_3}(z_1,z_2,z_3)=(z_3+1)\left[
	f_{A_1\times A_1}(z_1,z_2)+2f_{A_2}(z_1,z_2)
	\right]
\]
\[
	=(z_3+1)[2z_1z_2+2(2z_2+1)z_1]= 2z_1[3z_2z_3+3z_2+z_3+1]
\]	
\[=2z_1(3z_2+1)(z_3+1).
\]
Since this is a product of linear factors in the $z_i$, this verifies Theorem \ref{thm A} in this case. 

More generally, Theorem \ref{thm A} and Equation \eqref{eq: multivariable f(z) for independent probabilities} imply the following
\[
    f_{A_n}(z_1,\cdots,z_n)=z_1\prod_{i=1}^{n-1}(i+(n+1-i)z_{i+1}).
\]

In \cite[Corollary 5.10]{IS2} we show for type $B_n$ and $C_n$, the analogue of Theorem \ref{thm A} holds and we obtain the following.
\[
    f_{B_n}(z_1,\cdots,z_n)=\prod_{i=1}^{n}(i-1+(n+1-i)z_{i}).
\]
Also, it is easy to see that the analogue of Theorem \ref{thm A} holds for type $G_2$. We believe that this statement fails for every other type ($D_n, E_6,E_7,E_8,F_4$). 

For $D_4$, we have the following.
\[
	f_{D_4}(z_1,z_2,z_3,z_4)=(z_4+2)[f_{A_1\times A_1\times A_1}(z_1,z_2,z_3)+3f_{A_3}(z_1,z_2,z_3)
\]
\[
	=(z_4+2)[
	6z_1z_2z_3+6z_1(3z_2z_3+3z_2+z_3+1)
	]
\]
\[
	=6z_1(z_4+2)(4z_2z_3+3z_2+z_3+1)
\]
which does not factor any further. Thus, Theorem \ref{thm A} does not hold for $D_4$. The events $B_2,B_3$ are not independent.

However, when the variables are collapsed to one variable $z$, we obtain
\[
	f_{D_4}(z)=6z(z+2)(2z+1)^2
\]
as expected (See \cite{FR}.) This is the polynomial which gives $f_{D_4}(1)=162$, the number of complete exceptional sequences and $f_{D_4}(2)=1200$, the number of complete signed exceptional sequences which is $4!$ times the number of clusters \cite{IT13}.


\section{Further research} For any Dynkin quiver, the number of exceptional sequences is given by a product formula similar to the one for $A_n$. This suggests that there may be independent events being counted. It would be very interesting to find what these events might be in other Dynkin types. This is a challenging problem in types $D_4, D_5$ and we have not done it in other types.

For rooted labeled forests that correspond to complete exceptional sequences of type $A_n$ we were not able to find a definition of a partial rooted labeled forest corresponding to exceptional sequences of length $k<n$.
\bigskip

The author thanks Steve Hermes and Keith Merrill for many conversations about probability distributions of representations. He also thanks Olivier Bernardi and Emre Sen for very productive discussions about the corresponding distribution of rooted labeled forests, leading to \cite{IS} and \cite{IS2}. The author also thanks Gordana Todorov for many discussions about signed exceptional sequences before and after \cite{IT13}. The author is supported by the Simons Foundation.

\end{document}